\newcommand{\CC}{\mathbb{C}}
\newcommand{\RR}{\mathbb{R}}
\newcommand{\ZZ}{\mathbb{Z}}
\newcommand{\PP}{\mathbb{P}}
\newcommand{\mcC}{\mathcal{C}}
\newcommand{\mcB}{\mathcal{B}}
\newcommand{\mcO}{\mathcal{O}}
\newcommand{\fd}{\mathfrak{d}}
\newcommand{\fm}{\mathfrak{m}}
\newcommand{\ft}{\mathfrak {t}}
\newcommand{\fo}{\mathfrak{o}}
\newcommand{\fS}{\mathfrak{S}}
\newcommand{\WCC}{\underline{\mcC}}
\newcommand{\WCB}{\underline{\mcB}}
\newcommand{\pic}{\mathop{\mathrm{Pic}}\nolimits}
\newcommand{\Div}{\mathop{\mathrm{Div}}\nolimits}
\newcommand{\irr}{\mathop{\mathrm{Irr}}\nolimits}
\newcommand{\str}{\mathop{\mathrm{Str}}\nolimits}
\newcommand{\sing}{\mathop{\mathrm{Sing}}\nolimits}
\newcommand{\ord}{\mathop{\mathrm{ord}}\nolimits}
\newcommand{\merid}{\mathfrak{m}}
\newcommand{\comb}{\mathop{\mathrm{Comb}}\nolimits}
\newcommand{\blo}{\mathrm{bl}}
\DeclareMathOperator{\coker}{coker}
\newtheorem{thm}{Theorem}[section]
\newtheorem{lem}[thm]{Lemma}     
\newtheorem{cor}[thm]{Corollary}
\newtheorem{prop}[thm]{Proposition}
\theoremstyle{definition}
\newtheorem{defin}[thm]{Definition}
\newtheorem{ex}[thm]{Example}
\theoremstyle{remark}
\newtheorem{rem}[thm]{Remark}
\title
{
Torsion divisors of plane curves  and Zariski pairs 
}
\date{\today}
\author[E. Artal]{Enrique Artal Bartolo}
\address{
Departamento de Matema{\'a}ticas, IUMA, Universidad de Zaragoza, C. Pedro Cerbuna 12, 50009 Zaragoza, SPAIN}
\email{artal@unizar.es}
\author[S. Bannai]{Shinzo Bannai}
\address{Department of Natural Sciences, National Institute of Technology, Ibaraki College, 866 Nakane, Hitachinaka, Ibaraki 312-8508, Japan}
\email{sbannai@ge.ibaraki-ct.ac.jp}
\author[T. Shirane]{Taketo Shirane}
\address{Department of Mathematical Sciences, Faculty of Science and Technology, Toku\-shima University, Tokushima, 770-8502, Japan}
\email{shirane@tokushima-u.ac.jp}
\author[H. Tokunaga]{Hiro-o TOKUNAGA}
\address{Department of Mathematical Sciences, Graduate School of Science,
Tokyo Metropolitan University,
1-1 Minami-Ohsawa, Hachiohji 192-0397 JAPAN}
\email{tokunaga@tmu.ac.jp}
\thanks{First named author is partially supported by PID2020-114750GB-C31 / AEI/ 10.13039/501100011033 and Departamento de Ciencia, Universidad y Sociedad del Conocimiento of the Gobierno de Arag{\'o}n (E22\_20R:
``{\'A}lgebra y Geometr{\'i}a''). Second named author is partially supported by Grant-in-Aid for Scientific Research C (18K03263). Third named author is partially supported by Grant-in-Aid for Scientific Research C (21K03182).  Fourth named author is partially supported by Grant-in-Aid for Scientific Research C (17K05205).}
\begin{document}

\maketitle

\begin{abstract}
In this paper we study the embedded topology of reducible plane curves having a smooth irreducible component.   In previous studies, the relation between the topology and certain torsion classes in the Picard group of degree zero of the smooth component was implicitly considered. We formulate this relation clearly and  give a criterion for distinguishing the embedded topology in terms of torsion classes. Furthermore, we give a method of systematically constructing examples of curves where our criterion is applicable, and give new examples of Zariski tuples.  
\end{abstract}

\noindent
{\bf Keywords.} Plane curve arrangements, Torsion divisors, Splitting numbers,\\ Zariski pairs. 
\medskip

\noindent
{\bf Mathematics Subject Classification (2010).} {14H50, 14F45, 14F35.}

\section*{Introduction}
\label{intro}

The embedded topology of plane curves has been studied by many mathematicians since the initial work of Zariski \cite{zariski}. 
One of the main questions about the embedded topology of plane curves
is its relationship with the combinatorics of the curve (roughly speaking, decomposition in irreducible components with their degrees and topological type of singular points, see Definition~\ref{def:combinatorics}). This is because, the combinatorics
determine the topology of the regular neighborhood of the curve, but as is demonstrated in Zariski's example, it is not enough to determine the embedded topology.  Zariski's example also demonstrates \cite{zariski} that the \emph{geometric} position of singular points have a surprisingly  strong influence on the embedded topology (see~\cite{survey} for a detailed survey and references therein). 
A finite set of plane curves with the same combinatorics but different embedded topology with each other is called a \textit{Zariski tuple} (see Definition~\ref{def:zariski pair}). 

The first main invariant of the embedded topology is the fundamental group of the complement (cf. \cite{zariski, oka1996, eyral-oka}), which is very powerful but at the same time presents some issues. Its computation is usually difficult (where one aims to obtain a presentation of the fundamental group in terms of generators and relations)  and some times getting a presentation does not always provide relevant information, due to the difficulty in solving the so called "word problem". This is why other more effective (and algebraic) invariants have been used, such as Alexander polynomials \cite{libgober1982}  (involving cyclic covers) or the existence/non-existence of certain dihedral covers  \cite{tokunaga1994}. Later on, other finer invariants have been used such as characteristic varieties  \cite{libgober2001}
or $K3$-lattice invariants (see \cite{degtyarev, shimada}). 
Recently,  new kinds of invariants called \textit{linking invariants} (\cite{afg, benoit-meilhan}) and \textit{splitting invariants} (\cite{tokunaga1, bannai, shirane2016, shirane2019}) have been introduced for reducible curves, whose definitions do not involve the fundamental group (here by splitting invariants we mean invariants derived from splitting phenomena of plane curves under Galois covers). 
Linking invariants are derived from geometric topology, and splitting invariants are defined in terms of algebraic geometry. 
Both invariants essentially represent how a component of the reducible curve "tangles up" with the other components (cf. \cite{benoit-meilhan}, \cite{ben-shi}). 

The first interesting degree for the existence
of Zariski pairs or tuples is~$6$; the first cases of Zariski
pairs were found by several authors using Alexander polynomials
but for the final classification more delicate invariants
were needed, such as $K3$-lattice invariants, see
\cite{shimada,degtyarev}. The existence
of \emph{arithmetic} Zariski pairs (curves  defined 
over a number field $K$ whose topology depends
on the embedding $K\hookrightarrow\CC$) shows that
one needs to go beyond algebraic properties.

A particularly interesting case is given for curves having only smooth
irreducible components, e.g. line arrangements.
For line arrangements, the combinatorics has a simpler interpretation;
a famous example of Rybnikov~\cite{ryb} is a Zariski pair distinguished by the fundamental group. 
Some other Zariski pairs of line arrangements were obtained by other authors by using braid monodromy
or deep properties of the fundamental group. The Zariski pairs found 
in \cite{GB:16,GBVS} used a linking invariant. Conic-line arrangements and conic arrangements have also been studied \cite{NT, tokunaga14, tokunaga-bannai16}. 

Another family of interesting cases comes from the arrangements
of smooth curves allowing positive genus. A classical example
was studied in \cite{artal94}:
a smooth cubic $D$ and $3$ inflectional tangents of $D$, later
generalized to conic-cubic arrangements in~\cite{ac:98}. In these cases, the fundamental group is the key invariant to distinguish 
Zariski pair candidates. The case of more inflectional tangents
is studied in \cite{bgst} by using the linking and splitting invariants. These Zariski pairs (or tuples) are characterized by the position of the tangent points of the smooth cubic $D$ and the inflectional tangents. 
The linking and splitting invariants were also applied to give Zariski tuples of one smooth quartic and its bitangents in \cite{benoit-meilhan, bannai-ohno}. 
Other cases with two smooth non-linear components has been studied by I.~Shimada \cite{shimada2003},  the third named author~\cite{shirane2016} and the fourth named author~\cite{tokunaga1}.  The case of
one smooth curve~$D$ of arbitrary degree and an arrangement of lines
(usually with some tangency relation with $D$) has been studied
by the first named author and collaborators in \cite{acm} and
by the third named author in~\cite{shirane2019}. 
The linking and splitting invariants  have been used in these cases and an important
feature  is that for all of these curves  (besides small degree examples) the fundamental groups of their complements are abelian,
certifying the independence
of the linking invariants and the fundamental group. More recently, the second and fourth named authors considered the case of multiple pairs of  simple tangent lines to a cubic in \cite{bannai-tokunaga2019}. As in the case of inflectional points, the study is also made in terms of the geometric group structure of the cubic. 

In this paper, we consider the embedded topology of plane curves consisting of a smooth curve $D$ and some additional  possibly reducible curve $\WCB$ on the complex projective plane $\PP^2$. The key point
is to relate the topology of the arrangement with the torsion
classes of the subgroup $\pic^0(D)\subset \pic(D)$ of divisor classes of degree  zero of the smooth curve $D$. Our idea is
to formulate a criterion to distinguish the embedded topology of curves explicitly in terms of these torsion classes. 
We fix a large family
of cyclic covers and 
we construct a function associating each cyclic cover
to a \emph{splitting number} (one of the splitting invariants) for providing a criterion to distinguish
candidate Zariski pairs.

This criterion is useful in two directions.
On one side it allows us to re-explain previous results
in the literature in a uniform setting,
allowing us to distinguish the embedded topology of curves in a more finer way (cf. Section~\ref{sec:remarks}). 
On the other side, we are able to construct new Zariski tuples by using this criterion (Section~\ref{sec:example}). 

We note that the assumption that $D$ is smooth is somewhat technical (by this assumption $\pic^0(D)$ is easier to deal with) but we 
plan to generalize to the case where $D$ is allowed to be singular in the future. 
We have already a work in progress for the special case where $D$ is a nodal cubic and we expect to deal also with the case
of $D$ singular and non-rational.

We organize this paper as follows. 
In Section~\ref{sec:main}, we state the main results of this paper. 
In Section~\ref{sec:setting}, we recall the definition and basic properties of combinatorics of plane curves and certain subgroups of $\Div(D)$. 
In Section~\ref{sec:splitting number}, we give a formula to calculate splitting numbers in terms of the order of certain elements of $\pic^0(D)$, and formulate a criterion (Corollary~\ref{cor:n-torsion}) to distinguish the embedded topology of plane curves by using the formula. 

In Section~\ref{sec:remarks}, we apply   the criterion 
to some previous results. 
In Section~\ref{sec:example}, we give a method of systematically constructing examples of curves which are generalizations of curves studied by Shimada \cite{shimada2003} where the criterion is applicable, and give a new Zariski $4$-tuple (Theorem~\ref{thm:example}). 

\section{Main results}\label{sec:main}
The plane curves which are considered in this paper are of the following form: 
\[ \WCC:=D+\mcC_1+\dots+\mcC_k, \]
where $D$ is a smooth curve of degree $d_0$, and $\mcC_j$ is a possibly reducible curve of degree $d_j$ for each $j=1,\dots,k$. 
Since $\mcC_1,\dots,\mcC_k$ are possibly reducible, there may be different decompositions of $\WCC$. 
We describe the above decomposition of $\WCC$ by $[\WCC]:=(D;\mcC_1,\dots,\mcC_k)$. 
First, we describe how we obtain torsion classes of $\pic^0(D)$ from a decomposition $[\WCC]$ of $\WCC$ as above. 
We denote the subgroup of $\pic^0(D)$ consisting of all $n$-torsion classes by $\pic^0(D)[n]$; for $m\in\mathbb{N}$ we denote by $\Div(D)_m$ the subgroup of divisors
whose degree is divisible by~$m$: 
\begin{align*}
	\pic^0(D)[n]&:=\{ \mathfrak{c}\in\pic^0(D) \mid n\mathfrak{c}= 0  \mbox{ in $\pic^0(D)$} \},
	\\
	\Div(D)_m &:=\{ \fd\in\Div(D) \mid \deg\fd\equiv0\pmod{m}
	\}.
\end{align*}
Put $\fo_D:=L|_D$ for a line $L\subset\PP^2$. 
Note that $\fo_D$ is a divisor of degree $d_0$ on $D$. 
Put 
\[ n_{[\WCC]}:=\gcd\Big(
\{(\mcC_{j}\cdot D)_P\mid P\in \mcC_{j}\cap D, \ j=1,\dots,k\}\cup\{d_1,\dots,d_k\}\Big)
. \]
We write $n=n_{[\WCC]}$ by omitting the subscript $[\WCC]$ for short if there is no confusion. 
There uniquely exists an effective divisor $\fd_j:=\fd_j[\WCC]$ on $D$ such that $n\fd_j={\mcC_j}|_D$ for each $j=1,\dots,k$. 
Let $\ft_{j}[\WCC]:=\fd_{j}-\frac{d_{j}}{n}\fo_{D}$. 
The divisor $\ft_j:=\ft_j[\WCC]$ satisfies 
\[ n\ft_j=n\left(\fd_j-\frac{d_j}{n}\fo_D\right)={\mcC_j}|_D-d_j\fo_D=(\mcC_j-d_jL)|_D\sim 0. \]
By abuse of notation, we denote the divisor class by the same symbol of its representative, for example, $\ft_{j}$  will also describe the element of $\pic^0(D)[n]$ containing the divisor $\ft_j$. 

Let $G[\WCC]$ be the subgroup of $\pic^0(D)[n]$ generated by $\ft_{1},\dots,\ft_{k}\in\pic^0(D)[n]$. 
We define 
$\tau_{[\WCC]}:\ZZ^{\oplus k}\twoheadrightarrow G[\WCC]$ as the homomorphism given by  
\[
\tau_{[\WCC]}(a_1,\dots,a_k)=a_1\ft_{1}+\dots+a_k\ft_{k}. 
\]
Let $\bm{d}:=(d_1,\dots,d_k)$; a permutation $\rho$ of $k$ letters is said to be \textit{admissible} with respect to $\bm{d}$ if $d_j=d_{\rho(j)}$ for each $j$. 
We consider permutations with  additional conditions later (see Subsection~\ref{subsect:combinatorics}), however the definition of admissibility above is simpler, and we use this admissibility above to keep the statement of  Proposition~\ref{prop:n-torsion} simpler. 
Note that the symmetric group $\fS_k$ acts on $\ZZ^{\oplus k}$ by 
\[ \rho(a_1,\dots,a_k)=\left(a_{\rho^{-1}(1)},\dots,a_{\rho^{-1}(j)},\dots,a_{\rho^{-1}(k)}\right) \]
for $\rho\in\fS_k$ and $(a_1,\dots,a_k)\in\ZZ^{\oplus k}$, and $\rho$ can be regarded as a map $\rho:\ZZ^{\oplus k}\to~\ZZ^{\oplus k}$.  

\begin{prop}\label{prop:n-torsion}
Let $\WCC_i$ $(i=1,2)$ be two plane curves 
\[ \WCC_i:=D_i+\mcC_{i1}+\dots+\mcC_{ik} \]
where $D_1$ and $D_2$ are smooth curves of degree $d_0$, and $\mcC_{ij}$ are plane curves of $\deg\mcC_{1j}=\deg\mcC_{2j}=d_j$. 
Put $[\WCC_i]:=(D_i;\mcC_{i1},\dots,\mcC_{ik})$. 
Assume $n_{[\WCC_1]}=n_{[\WCC_2]}$. 
Let $\rho$ be an admissible permutation with respect to $\bm{d}$. 
If $\ker \tau_{[\WCC_1]}\ne\ker\tau_{[\WCC_2]}\circ\rho$, then there exists no homeomorphism $h:\PP^2\to\PP^2$ such that $h(D_1)=D_2$ and $h(\mcC_{1j})=\mcC_{2\rho(j)}$. 
In particular, if $G[\WCC_1]$ and $G[\WCC_2]$ are not isomorphic, then there exist no homeomorphism $h:\PP^2\to\PP^2$ and no permutation $\rho$ of $k$ letters such that $h(D_1)=D_2$ and $h(\mcC_{1j})=\mcC_{2\rho(j)}$. 
\end{prop}

In the case when $D$ has a special point such as a maximal flex,
Proposition~\ref{prop:n-torsion} can be formulated in a different way which will be discussed in~\cite{cubic_torsion}.
Proposition  \ref{prop:n-torsion} tells us about the existence/non-existence of certain homeomorphisms. However, in order to apply it and to give a criterion to distinguish Zariski tuples, we need to consider the combinatorics of the curves in detail which can be quite subtle. 
See Subsection~\ref{subsect:combinatorics} for the definitions and notation used in Theorem~\ref{thm:n-torsion} and Corollary~\ref{cor:n-torsion}.

\begin{thm}\label{thm:n-torsion}
Let $\WCC_i:=D_i+\mcC_{i1}+\dots+\mcC_{ik}$ ($i = 1, 2$) be two plane curves satisfying the hypotheses of Proposition{\rm~\ref{prop:n-torsion}} and having the same combinatorics. 
We assume that any equivalence map $\varphi:\comb(\WCC_1)\to\comb(\WCC_2)$ is admissible to $([\WCC_1],[\WCC_2])$. 
Then if $\ker\tau_{[\WCC_1]}\ne\ker\tau_{[\WCC_2]}\circ\rho$ for any admissible permutation $\rho$ to $([\WCC_1],[\WCC_2])$, then $(\WCC_1,\WCC_2)$ is a Zariski pair. 
\end{thm}

In particular, we have the following:

\begin{cor} \label{cor:n-torsion}
Assume the same hypotheses as Theorem~\ref{thm:n-torsion}, and furthermore put $\ft_{i,j}:=\ft_{j}[\WCC_i]\in\pic^0(D_i)[n]$ for $i=1,2$ and $j=1,\dots,k$. Then the following statements hold:
\begin{enumerate}[label=\rm(\roman{enumi})]

 \item \label{cor:n-torsion-iii} 
 If $G[\WCC_1]$ and $G[\WCC_2]$ are not isomorphic, 
 then $(\WCC_1, \WCC_2)$ is a Zariski pair.

 \item\label{cor:n-torsion-i} If $k=1$ and $\ord(\ft_{1,1})\not= \ord(\ft_{2,1})$,
 then $(\WCC_{1}, \WCC_{2})$ is a Zariski pair.
 
 \item \label{cor:n-torsion-ii} If $(\ord(\ft_{1,1}),\dots,\ord(\ft_{1,k}))\not= (\ord(\ft_{2,\rho(1)}),\dots,\ord(\ft_{2,\rho(k)}))$ for any permutation $\rho$ admissible to $([\WCC_1],[\WCC_2])$,
 then $(\WCC_1, \WCC_2)$ is a Zariski pair.

 \end{enumerate}

 \end{cor}
 
 \begin{rem}
 Note that Corollary~\ref{cor:n-torsion} can be applied to Zariski pairs previously given
 as follows:
 \begin{itemize} 
 
 \item A Zariski pair for $3$-Artal arrangements \cite{artal94}
 follows from Corollary~\ref{cor:n-torsion}~\ref{cor:n-torsion-i}.
 
 \item  A Zarsiki pair for a smooth cubic and its tangent lines considered in \cite{bannai-tokunaga2019}
 follows from Corollary~\ref{cor:n-torsion}~\ref{cor:n-torsion-iii}.

\end{itemize}
We will explain these   examples in   more detail in Section~\ref{sec:remarks}.
\end{rem}

Theorem~\ref{thm:n-torsion} and Corollary~\ref{cor:n-torsion} enable us to distinguish the embedded topology simply. 
In Section~\ref{sec:example}, we consider plane curves whose embedded topology can be distinguished by Corollary~\ref{cor:n-torsion}. 
We also give a method to systematically construct more complicated examples based on the data of simple cases. 
In particular, we construct the following Zariski $4$-tuple. 

\begin{thm}\label{thm:example}
	There exists a Zariski $4$-tuple of plane curves of the form $\WCC=D+C$, where $D$ and $C$ are smooth curves of degree $4$ and $6$, respectively, and $(D\cdot C)_P=6$ for any $P\in D\cap C$. 
\end{thm}

\section{Settings}\label{sec:setting}

\subsection{Combinatorics and Zariski tuples}\label{subsect:combinatorics}

The definition of the combinatorics of a curve is given in \cite{survey}, but we give it here for sake of completeness and the convenience of the reader. We also formulate the notion of {\it equivalence} of  combinatorial types which was intrinsically considered in  \cite{survey}, but not explicitly given there.

\begin{defin}\label{def:combinatorics}
Let $\mcC$ be a plane curve in $\PP^2$. Let $\blo_\mcC:\hat\PP^2\to\PP^2$ be the minimal embedded resolution of its set 
$\sing_\mcC$ of singular points
(i.e., the minimal sequence of blowing-ups which resolve $\sing_\mcC$ and such that $\blo_\mcC^{-1}(\mcC)$ 
is a simple normal crossing divisor).
Let $\irr_\mcC$ be the set of irreducible components of $\mcC$, and let $(\Gamma_\mcC,\str_\mcC,e_\mcC)$ be the $3$-tuple given by
\begin{enumerate}
\item $\Gamma_\mcC$ is the dual graph of the simple normal crossing curve $\blo_\mcC^{-1}(\mcC)$ with set of vertices $\mathcal{V}_\mcC$.
\item $\str_\mcC$ is the set of vertices corresponding to the strict transforms of the irreducible components of $\mcC$ (in natural bijection
with $\irr_\mcC$).
\item The Euler map $e_\mcC:\mathcal{V}_\mcC\to\mathbb{Z}$ is the map of self-intersections.
\end{enumerate}
Then $\comb(\mcC):=(\Gamma_\mcC,\str_\mcC,e_\mcC)$ is the \emph{combinatorial type} (or \emph{combinatorics}) of the curve $\mcC$.
\end{defin}

\begin{rem}
The degrees and genera of the irreducible components of $\mcC$ can be derived from their combinatorics. 
Conversely, the combinatorics of a plane curve $\mcC$ is determined by combinatorial data of $\mcC$ (degrees of curves in $\irr_\mcC$, topological types of singularities of $\mcC$, and so on). 
Since $\str_\mcC$ and $\irr_\mcC$ have a canonical bijection we will identify them
as far as no confusion arises.
\end{rem}

\begin{defin}
Let $\mcC_1,\mcC_2$ be plane curves in $\PP^2$. 
\begin{enumerate}
\item An \emph{equivalence map} $\varphi:\comb(\mcC_1)\to\comb(\mcC_2)$ 
is an isomorphism $\tilde\varphi:\Gamma_{\mcC_1}\to\Gamma_{\mcC_2}$ of graphs such that $\tilde\varphi(\str_{\mcC_1})=\str_{\mcC_2}$ and $e_{\mcC_1}=e_{\mcC_2}\circ\tilde\varphi$. 

\item The curves $\mcC_1,\mcC_2$ have the \emph{same combinatorial type} if there exists an equivalence map 
$\varphi:\comb(\mcC_1)\to\comb(\mcC_2)$. 

\item A homeomorphism $h:(\PP^2,\mcC_1)\to(\PP^2,\mcC_2)$
induces an equivalence map $\varphi_h:\comb(\mcC_1)\to\comb(\mcC_2)$ called the \emph{equivalence map induced by $h$}.
\end{enumerate}
\end{defin}

\begin{rem}
An equivalence map $\varphi:\comb(\mcC_1)\to\comb(\mcC_2)$ induces canonical bijections $\varphi_{\irr}:\irr_{\mcC_1}\to\irr_{\mcC_2}$ and $\varphi_{\sing}:\sing_{\mcC_1}\to\sing_{\mcC_2}$, where $\sing_{\mcC_i}$ is the set of singular points of $\mcC_i$. 
Note that $\varphi_{\irr}$ and $\varphi_{\sing}$ preserve degrees of irreducible components and topological types of singularities, respectively. 
\end{rem}

Given curves $\WCC_i$ and decompositions $[\WCC_i]$ $(i=1,2)$ as in Section \ref{sec:main} and a homeomorphism $h:\PP^2\rightarrow \PP^2$ satisfying $h(\WCC_1)=\WCC_2$,  
$h$ induces an equivalence map of combinatorial types $\varphi_h: \comb(\WCC_1) \rightarrow \comb(\WCC_2)$. However, this equivalence need not preserve the structure of the decompositions $[\WCC_i]$ in general, i.e. $\{h(\mcC_{11}),\ldots, h(\mcC_{1k_1})\}\not=\{\mcC_{21}, \ldots, \mcC_{2k_2}\}$ in general. Since our arguments later depend on the decompositions $[\WCC_i]$ we need to take this fact into account, which leads to the notion of admissibility of equivalence maps and permutations used in Theorem~\ref{thm:n-torsion} and Corollary~\ref{cor:n-torsion}. They are defined as follows:

\begin{defin}
Let $\WCC_i:=D_i+\mcC_{i,1}+\dots+\mcC_{i,k}$ ($i=1,2$) be two plane curves as in Proposition~\ref{prop:n-torsion}, and put $[\WCC_i]:=(D_i;\mcC_{i,1},\dots,\mcC_{i,k})$. 
Assume that $\WCC_1$ and $\WCC_2$ have the same combinatorial type. 
\begin{enumerate}

\item An equivalence map $\varphi\!:\comb(\WCC_1)\to\comb(\WCC_2)$ is \textit{admissible to $([\WCC_1],[\WCC_2])$} if $\varphi_{\irr}(D_1)=D_2$ and there exists a permutation $\rho_\varphi$ of $k$ letters such that $\varphi_{\irr}(\irr_{\mcC_{1,j}})=\irr_{\mcC_{2,\rho_\varphi(j)}}$ for any $j=1,\dots,k$. 

\item A permutation $\rho$ of $k$ letters is \textit{admissible to $([\WCC_1],[\WCC_2])$} if there exists an equivalence map $\varphi:\comb(\WCC_1)\to\comb(\WCC_2)$ admissible to $([\WCC_1],[\WCC_2])$ such that $\rho=\rho_\varphi$. 

\end{enumerate}
\end{defin}
\begin{ex}
Let $E$ be a smooth cubic curve and $P\in E$ be a general point of $E$. It is known that there are four lines $L_j$ $(j=1, \ldots, 4)$ passing through $P$ and tangent to $E$ at $Q_j\not=P$. Let $\WCC=E+L_1+L_2+L_3+L_4$ and consider decompositions $[\WCC]_1=(E;(L_1+L_2), (L_3+L_4))$ and 
$[\WCC]_2=(E;(L_1+L_3), (L_2+L_4))$. The identity map $ {\rm id}:\PP^2\rightarrow\PP^2$  trivially induces an equivalence map of combinatorics $\varphi_{{\rm id}}: \comb(\WCC)\rightarrow\comb(\WCC)$. However $\varphi_{\rm id}$ is not admissible with respect to $([\WCC]_1, [\WCC]_2)$.

On the other hand, let $P_1, P_2\in E$ be distinct general points and let $L_{ij}$ be lines passing through $P_i$ and are tangent to $E$ at $Q_{ij}\not=P_1, P_2$. Let $\WCC=E+L_{11}+L_{12}+L_{21}+L_{22}$ and consider a decomposition $[\WCC]=(E;(L_{11}+L_{12}), (L_{21}+L_{22}))$. Then every equivalence map $\varphi:\comb(\WCC)\rightarrow\comb(\WCC)$ is admissible with respect to $([\WCC], [\WCC])$ by the definition of $\varphi$. 

\end{ex}

We recall the definition of Zariski tuples. 

\begin{defin}\label{def:zariski pair}
Let $\WCC_1,\dots,\WCC_N$ be $N$ plane curves. 
An $N$-tuple $(\WCC_1,\dots,\WCC_N)$ is called a \textit{Zariski $N$-tuple} if 
\begin{enumerate}
	\item $\WCC_1,\dots,\WCC_N$ have the same combinatorics, and 
	\item $(\PP^2,\WCC_i)$ and $(\PP^2,\WCC_j)$ are not homeomorphic for any $i\ne j$.  
\end{enumerate}
In the case where $N=2$, a Zariski $2$-tuple is called a \textit{Zariski pair}.
\end{defin}

\subsection{A subgroup of \texorpdfstring{$\Div(D)$}{Div(D)} and a homomorphism \texorpdfstring{$\phi_{\fo_D}$}{phio}}\label{subsec:hom_phi}

Assume that $D$ is a smooth plane curve of degree $d_0$. We choose a  line $L$ and set
$\fo_D = L|_D$.  
Let
\[
\Div(D)_{d_0} := \{ \fd \in \Div(D) \mid \text{$d_0$ divides $\deg \fd$} \}.
\]
Since $\deg(\fd_1 - \fd_2) = \deg \fd_1 - \deg \fd_2$ for any $\fd_1,\fd_2\in\Div(D)_{d_0}$, $\Div(D)_{d_0}$ is a subgroup of $\Div(D)$. We define
a map $\psi_{\fo_D}  : \Div(D)_{d_0} \to \pic^0(D)$ by
\[
\psi_{\fo_D} : \fd \mapsto \fd - \frac{\deg\fd}{d_0} \fo_D.
\]
It is easy to see that $\psi_{\fo_D}$ is a homomorphism. We say that $\fd \in \Div(D)_{d_0}$ is a \emph{$\nu$-torsion divisor}
with respect to $\fo_D$ 
if $\psi_{\fo_D}(l\fd) \neq 0$ ($1 \le l \le \nu-1$)  and $\psi_{\fo_D}(\nu\fd) = 0$. 

Let $\fd \in \Div(D)_{d_0}$ be 
an effective divisor which is a $\nu$-torsion with
respect to $\fo_D$. Then there exists  $\varphi \in \CC(D)\setminus\{0\}$, where $\CC(D)$ is the rational function field of $D$,
such that
\[
(\varphi) = \nu\left(\fd - \frac{\deg \fd}{d_0}\fo_D\right).
\]
This means that there exists $s \in H^0(D, \mcO(\frac{\nu\deg\fd}{d_0}\fo_D))$ such that $\nu\fd$ is the effective divisor~$(s)$.  
Let us consider the short exact sequence of sheaves on~$\PP^2$:
\[
0\to\mcO_{\PP^2}\left(\frac{\nu\deg \fd}{d_0}L - D\right)\to
\mcO_{\PP^2}\left(\frac{\nu\deg \fd}{d_0}L\right)\to i_*\mcO_D\left(\frac{\nu\deg \fd}{d_0} \fo_D\right)\to 0,
\]
where $i:D\hookrightarrow\PP^2$ is the inclusion.
As $H^1(\PP^2, \mcO(\frac{\nu\deg \fd}{d_0} L- D) = 0$, 
the map
\[
H^0\left(
\PP^2, \mcO
\left(\frac{\nu\deg \fd}{d_0} L\right)
\right) \to  
H^0\left(D, \mcO\left(\frac{\nu\deg \fd}{d_0} \fo_D\right)\right)
\]
is surjective. This means that there exists a homogeneous polynomial $g$,
of degree $\frac{\nu\deg \fd}{d_0}$, such that 
the plane curve $C$ given by $g=0$ defines $\nu\fd$, i.e., $C|_D = \nu\fd$.

The curve $C$ and the divisor $\fd$ can be considered as a geometric description of a $\nu$-torsion with respect to $\fo_D$.
Note that $\fd$ itself is not given by any plane curve (if $\nu>1$).

\begin{rem}
In \cite{cubic_torsion}, we consider the case where the smooth curve $D$ has a maximal flex point $P$, and discuss a similar argument taking $\fo_D$ as the maximal flex point~$P$.
\end{rem}

\section{Splitting numbers and proofs of main results}\label{sec:splitting number}

In this section we translate the torsion properties 
into topological properties. 
Consequently, we prove  the main results presented in Section \ref{sec:main}.
We first define the splitting number of an irreducible curve for a cyclic cover. 

Let $\phi:X\to\PP^2$ be a cyclic cover of degree $n$ branched along 
a projective curve $\WCB$ given by a surjection $\theta:\pi_1(\PP^2\setminus \WCB)\twoheadrightarrow\ZZ_n$, 
where $\ZZ_n$ is the cyclic group of order $n$. 
Let $\WCB_\theta$ be the following divisor on $\PP^2$
\[ \WCB_\theta:=\sum_{i=1}^{n-1} i\,\mcB_{\theta,i}, \]
where $\mcB_{\theta,i}$ is the sum of irreducible components of $\WCB$ whose meridians are sent to $[i]\in\ZZ_n$ by $\theta$. 
Then we call $\phi:X\to\PP^2$ the \textit{$\ZZ_n$-cover} (or \textit{$n$-cyclic cover}) \textit{of type $\WCB_\theta$}. 
Note that the degree of $\WCB_\theta$ is divisible by $n$ since $\phi$ is of degree $n$. 

\begin{rem}\label{rem:meridians}
Let us recall the definition of a meridian. Let $\mcC$ be a projective plane curve with irreducible components $C_i$.
A \emph{meridian} of $C_i$ in $\pi_1(\PP^2\setminus \mcC)$ is 
a free homotopy class  defined as follows. Let $p_0$ be a base point
and let $p_i\in C_i$ be a smooth point of $\mcC$. Pick  a smooth holomorphic $2$-disk $\Delta_i$ such that $\mcC\cap\Delta_i=\{p_i\}$
and the intersection is transversal. Let $p_i'\in\partial\Delta_i$.
A meridian is the homotopy class of the composition of an arbitrary path $\alpha$ from 
$p_0$ to $p'_i$ in $\PP^2\setminus \mcC$ with $\partial\Delta_i$ (counterclockwise) and $\alpha^{-1}$. As for free homotopy classes
the base point is not important and it can be chosen as $p'_i$.
It is not hard to prove that with other choices the free homotopy
class is not changed (the key is that $C_i\setminus\sing \mcC$ is connected). An \emph{antimeridian} is constructed in the same way
by running $\partial\Delta_i$ clockwise (getting the free homotopy
class of the inverses of the meridians).
\end{rem}

\begin{defin}[\cite{shirane2016}]
Let $D$ be an irreducible curve which is not a component of $\WCB$. The \emph{splitting
number} $s_{\phi}(D)$ of $D$ with respect to~$\phi$ is the number of irreducible
components of~$\phi^*D$. 
\end{defin}

Note that $0\leq n(r-\lfloor r\rfloor)<n$ for any $r\in\RR$, and $n(\frac{l}{n}-\lfloor\frac{l}{n}\rfloor)\equiv l\pmod{n}$ for any $l\in\ZZ$, where $\lfloor r\rfloor$ is the maximal integer not beyond $r\in\RR$. 
We consider splitting numbers of $D$ for certain $\ZZ_n$-covers. 
The following lemma enable us to reduce the number of $\ZZ_n$-covers which should be considered. 

\begin{lem}\label{rem: inverse}
Let $\WCB:=\sum_{j=1}^{n-1}\mcB_j$ be a plane curve. 
For $l\in\ZZ$ with $\gcd(l,n)=1$, put the divisor $\WCB^{(l)}$ as  
\begin{align*}
	\WCB^{(l)} &:=\sum_{i=1}^{n-1}n\left(\frac{il}{n}-\left\lfloor\frac{il}{n}\right\rfloor\right)\mcB_{i}.
\end{align*}
The degree $\deg\WCB^{(l_1)}$ is divisible by $n$ for some $l_1\in\ZZ$ with $\gcd(l_1,n)=1$ if and only if $\deg\WCB^{(l)}$ is divisible by $n$ for any $l\in\ZZ$ with $\gcd(l,n)=1$. 
Moreover, for any irreducible curve $D$ which is not a component of $\WCB$, the splitting numbers $s_{\phi^{(l_1)}}(D)$ and $s_{\phi^{(l_2)}}(D)$ coincide for any $l_1,l_2\in\ZZ$ with $\gcd(l_i,n)=1$ $(i=1,2)$, 
\[ s_{\phi^{(l_1)}}(D)=s_{\phi^{(l_2)}}(D), \]
where $\phi^{(l)}:X\to\PP^2$ is the $\ZZ_n$-cover of type $\WCB^{(l)}$. 
\end{lem}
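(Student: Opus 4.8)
The plan is to reduce everything to the observation that the two $\ZZ_n$-covers $\phi^{(l_1)}$ and $\phi^{(l_2)}$ differ only by an automorphism of $\ZZ_n$, so that they are isomorphic as covers (not respecting the deck action, but respecting the underlying branched cover structure) after relabeling. First I would fix the surjection $\theta:\pi_1(\PP^2\setminus\WCB)\twoheadrightarrow\ZZ_n$ that defines $\phi$; by construction $\theta$ sends a meridian of $\mcB_i$ to $[i]$, and the divisor associated with $\theta$ is $\WCB_\theta=\sum i\,\mcB_i=\WCB^{(1)}$. For $l$ coprime to $n$, multiplication by $l$ is an automorphism $\mu_l\in\mathrm{Aut}(\ZZ_n)$, and $\theta^{(l)}:=\mu_l\circ\theta$ is again a surjection onto $\ZZ_n$ whose associated divisor is exactly $\WCB^{(l)}$, because a meridian of $\mcB_i$ is sent to $[il]=[n(\tfrac{il}{n}-\lfloor\tfrac{il}{n}\rfloor)]$ and the normalization $0\le n(\tfrac{il}{n}-\lfloor\tfrac{il}{n}\rfloor)<n$ is precisely the representative used in the definition of $\WCB^{(l)}$.

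The first assertion about divisibility of degrees then follows formally: $\deg\WCB^{(l)}$ being divisible by $n$ is exactly the condition that $\WCB^{(l)}=\WCB_{\theta^{(l)}}$ be the branch divisor of a genuine degree-$n$ cyclic cover, equivalently that $\theta^{(l)}$ extends appropriately; but more directly, $\mu_l$ being a group automorphism commutes with taking degrees modulo $n$ in the sense that $\sum i\,(\mcB_i\cdot L)\equiv 0\pmod n$ controls all the $\WCB^{(l)}$ simultaneously. Concretely I would argue: the existence of the cover $\phi$ already forces $\deg\WCB_\theta\equiv 0\pmod n$; and since $\theta^{(l)}$ is obtained from $\theta$ by post-composing with an automorphism of $\ZZ_n$, the homomorphism $\theta^{(l)}$ is well-defined on $\pi_1$ and likewise defines a degree-$n$ cover, which forces $\deg\WCB^{(l)}\equiv 0\pmod n$ as well. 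Running this with $l=l_1$ as hypothesis and any $l$ gives the equivalence; the "only if" direction is the trivial implication that it holds for all $l$ hence in particular for $l_1$.

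For the splitting number equality, the key point is that $\phi^{(l_1)}:X_1\to\PP^2$ and $\phi^{(l_2)}:X_2\to\PP^2$ are isomorphic as branched covers of $\PP^2$. Indeed, the connected $n$-cyclic cover determined by a surjection onto $\ZZ_n$ depends only on the kernel of that surjection, and $\ker\theta^{(l_1)}=\ker(\mu_{l_1}\circ\theta)=\ker\theta=\ker(\mu_{l_2}\circ\theta)=\ker\theta^{(l_2)}$ since $\mu_{l_i}$ is injective. Hence there is a homeomorphism $\Psi:X_1\to X_2$ with $\phi^{(l_2)}\circ\Psi=\phi^{(l_1)}$ (it conjugates the $\ZZ_n$-action by $\mu_{l_2}\mu_{l_1}^{-1}$, but we do not need that). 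Since $\Psi$ is a homeomorphism over $\PP^2$, for an irreducible curve $D\not\subset\WCB$ it carries $(\phi^{(l_1)})^*D$ to $(\phi^{(l_2)})^*D$, inducing a bijection on irreducible (equivalently, connected) components of these preimages; therefore $s_{\phi^{(l_1)}}(D)=s_{\phi^{(l_2)}}(D)$.

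The step I expect to require the most care is the bookkeeping in the first paragraph: checking that the normalized coefficients $n(\tfrac{il}{n}-\lfloor\tfrac{il}{n}\rfloor)$ genuinely match the ones appearing in the definition of $\WCB^{(l)}$ and that reindexing $i\mapsto il\bmod n$ permutes the components $\mcB_i$ consistently, together with the subtlety that $\WCB$ as written, $\sum_{j=1}^{n-1}\mcB_j$, may have some $\mcB_j$ empty — one must allow zero coefficients. None of this is deep, but it is the place where a sign or an off-by-one in the floor function would break the argument, so I would write it out explicitly. Everything after that is the standard dictionary between connected abelian branched covers and subgroups of $\pi_1$, applied verbatim.
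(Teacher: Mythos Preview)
Your proposal is correct and follows essentially the same route as the paper: post-compose the surjection to $\ZZ_n$ with a multiplication-by-$l$ automorphism, observe that this yields the cover of type $\WCB^{(l)}$, and conclude that the covers for different $l$ are isomorphic over $\PP^2\setminus\WCB$, hence have equal splitting numbers. The only cosmetic differences are that the paper starts from $\theta^{(l_1)}$ (which exists by hypothesis) rather than from $\theta=\theta^{(1)}$, and that the paper invokes \cite[Proposition~1.3]{shirane2016} for the homeomorphism of covers where you argue directly from equality of kernels; your caveat ``Running this with $l=l_1$ as hypothesis'' already handles the first point.
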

\begin{proof}
Let $l_1\in\ZZ$ be an integer with $\gcd(l_1,n)=1$. 
Suppose that $\deg\WCB^{(l_1)}$ is divisible by $n$. 
Then there is a surjection $\theta^{(l_1)}:\pi(\PP^2\setminus\WCB)\to\ZZ_n$ given by 
\[ \theta^{(l_1)}(\merid_i)=
\left[ il_1-n\left\lfloor\frac{il_1}{n}\right\rfloor \right]=[il_1]\in\ZZ_n \] 
for any meridian $\merid_i$ of components of $\mcB_{i}$. 
Note that, since $\gcd(l_1,n)=1$, there exists $[l_1']\in\ZZ_n$ such that $[l_1l_1']=[1]\in\ZZ_n$, and the multiplication by $l_1'l$ gives an isomorphism $m_{l_1'l}:\ZZ_n\to\ZZ_n$ for any $l\in\ZZ$ with $\gcd(l,n)=1$. 
Hence we obtain the surjection $\theta^{(l_1)}_l:=m_{l_1'l}{\circ}\theta^{(l)}:\pi_1(\PP^2\setminus\WCB)\to\ZZ_n$ given by  
\[ \theta_l^{(l_1)}(\fm_i)=
[l'_1l{\cdot}il_1]=[il]\in\ZZ_n. \]
Moreover, $\theta^{(l_1)}_l$ defines the $\ZZ_n$-cover $\phi^{(l)}$ of type $\WCB^{(l)}$, and $\deg\WCB^{(l)}$ is divisible by $n$. 
Therefore, there exists a homeomorphism $\tilde{h}_{l_1,l}:X^{(l_1)}\setminus(\phi^{(l_1)})^{-1}(\WCB)\to X^{(l)}\setminus(\phi^{(l)})^{-1}(\WCB)$ with $\phi^{(l_1)}=\phi^{(l)}\circ \tilde{h}$ over $\PP^2\setminus\WCB$ by \cite[Proposition~1.3]{shirane2016}. 
In particular, $s_{\phi^{(l_1)}}(D)=s_{\phi^{(l)}}(D)$. 
\end{proof}

We next show a relation between the splitting numbers and the topology of a plane curve $\WCC$ of the following form;
\[
	\WCC :=D+\WCB \qquad
	\Bigg(\WCB :=\sum_{j=1}^k \mcC_j\Bigg), 
\]
where $D$ is a smooth curve of degree $d_0$, and $\mcC_j$ is a curve of degree $d_j$ for each $j=1,\dots,k$. 
Let $n\geq2$ be an integer, and put $\bm{d}:=(d_1,\dots,d_k)$. 
Let 
\[ 
\Theta_{\bm{d}}^n:=\left\{ (a_1,\dots,a_k)\in\ZZ^k \ \middle|\ 
\begin{array}{l}
a_1d_1+\dots+a_k d_k\equiv 0\pmod{n} \\
\gcd(a_1,\dots,a_k,n)=1
\end{array}
 \right\}. 
 \] 
Note that, if $\bm{a}\in\Theta_{\bm{d}}^n$, there is a surjection $\theta:\pi_1(\PP^2\setminus\WCB)\twoheadrightarrow\ZZ_n$ given by $\theta(\merid_j)=[a_j]$ for meridians $\merid_j$ of components of $\mcC_j$ ($j=1,\dots,k$). 
Moreover, this set essentially measures $\ZZ_n$-covers of $\PP^2$ whose ramification
locus is contained in $\WCB$
(if the coordinates are considered $\bmod\ n$). 

\begin{prop}\label{cor:meridian}
Let $\mcC_i=C_{i,1}+\dots+C_{i,k}$, $i=1,2$ be two curves such that there 
exists a homeomorphism $h:\PP^2\to\PP^2$, where $h(C_{1,j})=C_{2,j}$ for $j=1,\dots,k$.
Then, a meridian of $C_{1,j}$ is sent to either a meridian or an antimeridian of $C_{2,j}$.
\end{prop}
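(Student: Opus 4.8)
The plan is to track what a homeomorphism $h:\PP^2\to\PP^2$ with $h(\mcC_1)=\mcC_2$ and $h(C_{1,j})=C_{2,j}$ does on the level of fundamental groups of the complements, and to combine this with the orientation behaviour of $h$. First I would observe that $h$ restricts to a homeomorphism $h_0:\PP^2\setminus\mcC_1\to\PP^2\setminus\mcC_2$, inducing an isomorphism $(h_0)_*:\pi_1(\PP^2\setminus\mcC_1)\to\pi_1(\PP^2\setminus\mcC_2)$ (after choosing compatible base points and a path, which does not matter since we work with free homotopy classes of meridians as in Remark~\ref{rem:meridians}). The key geometric input is that a meridian of $C_{1,j}$ is, up to free homotopy and up to inversion, the unique conjugacy class represented by the boundary of a small transverse $2$-disk $\Delta$ meeting $\mcC_1$ once, at a smooth point $p\in C_{1,j}$. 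Its image under $h_0$ is freely homotopic to $\partial h(\Delta)$, where $h(\Delta)$ is a (topological, not necessarily holomorphic) $2$-disk meeting $\mcC_2$ transversally in the single smooth point $h(p)\in C_{2,j}$, with a well-defined boundary orientation coming from that of $\Delta$.

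The second step is to identify this image with a meridian or antimeridian of $C_{2,j}$. Since $h(p)$ is a smooth point of $\mcC_2$ lying on $C_{2,j}$, a regular neighbourhood of $h(p)$ in $\PP^2$ is a $4$-ball $B$ in which $\mcC_2$ looks like a transverse $2$-disk; inside $B$ the loop $\partial h(\Delta)$ is isotopic (shrinking $h(\Delta)$ toward $h(p)$) to the boundary of an arbitrarily small transverse disk at $h(p)$, traversed with one of the two orientations. The loop obtained with the orientation matching the complex orientation of $\PP^2$ is, by definition, a meridian of $C_{2,j}$, and the opposite one is an antimeridian. Thus $(h_0)_*$ sends the meridian of $C_{1,j}$ to a meridian of $C_{2,j}$ if $h$ is orientation-preserving near $p$, and to an antimeridian if $h$ is orientation-reversing near $p$; since $\PP^2$ is connected and $h$ is a homeomorphism, the local orientation behaviour is constant, so exactly one of the two cases occurs (uniformly in $j$, though the statement only claims it pointwise).

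I expect the main obstacle to be the purely point-set/low-dimensional-topology bookkeeping needed to make ``shrink the transverse disk inside a $4$-ball neighbourhood of a smooth point, keeping track of boundary orientation'' precise: one must check that $h(\Delta)$ can be arranged (after an ambient isotopy of $\PP^2\setminus\mcC_2$) to be a standard transverse disk near $h(p)$, that the free homotopy class of its boundary in $\PP^2\setminus\mcC_2$ is unchanged under this isotopy, and that the only ambiguity is the orientation — i.e. that meridian and antimeridian are genuinely the only two possibilities and they are swapped precisely by reversing orientation. All of this is standard (it is essentially the observation that the meridian is characterized, up to conjugacy and inversion, by bounding a transverse disk), and once it is set up the conclusion is immediate; the rest of the argument is formal.
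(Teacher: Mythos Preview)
Your approach is essentially the paper's: both localize near a smooth point $p\in C_{1,j}$, use that the complement of the curve in a small polydisk/ball has $\pi_1\cong H_1\cong\ZZ$ generated by a meridian, and argue that $h_*$ sends a generator to a generator. The paper makes this last step precise not by ``shrinking $h(\Delta)$'' but by a commutative square: choosing nested polydisks $\hat P_{2,j}\subset h(\hat P_{1,j})\subset h(P_{1,j})\subset P_{2,j}$ so that the inclusions $\hat P_{\bullet,j}\setminus C_{\bullet,j}\hookrightarrow P_{\bullet,j}\setminus C_{\bullet,j}$ are homotopy equivalences, one reads off that $h_*$ is an isomorphism $\ZZ\to\ZZ$ on the local $H_1$, hence carries a generator to $\pm$(generator). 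This sidesteps the isotopy bookkeeping you flag as the main obstacle.

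One point to correct: your claim that the sign is governed by whether $h$ preserves or reverses the orientation of $\PP^2$ is false. Complex conjugation on $\PP^2$ preserves the ambient orientation (real dimension~$4$) yet sends a meridian of a real curve to its antimeridian. The sign of $h_*(\gamma_{1,j})$ depends on how $h$ acts on the pair of orientations (ambient \emph{and} curve), not the ambient alone; the paper establishes uniformity in~$j$ in the subsequent Remark via $h_*(C_{1,j})=\varepsilon_j C_{2,j}$ in $H_2(\PP^2;\ZZ)$ and an intersection-number computation forcing $\varepsilon_j\varepsilon_{j'}=1$. Your proof of the proposition itself is unaffected, since ``meridian or antimeridian'' already follows from the generator statement, but your parenthetical ``uniformly in~$j$'' is not justified by the ambient-orientation dichotomy you give.
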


\begin{proof}
We are going to use an alternative description of meridians and antimeridians.
Let us consider a closed polydisk (in some analytic coordinates $u,v$)
$P_i$ centered at $p_i$ such that $v=0$ is 
$P_i\cap \mcC=P_i\cap C_i$. Then $\pi_1(P_i\setminus \mcC)=H_1(P_i\setminus \mcC;\ZZ)\cong\ZZ$ since $P_i\setminus \mcC$ has the homotopy type of a circle. Meridians and antimeridians of $C_i$ in $P_i$ are generators of this group.

%
Let $\mcC_1,\mcC_2\subset\PP^2$ and 
$h:(\PP^2,\mcC_1)\to(\PP^2,\mcC_2)$ be as in the statement. 
Let $h_\ast:H_2(\PP^2;\ZZ)\to H_2(\PP^2;\ZZ)$ be the isomorphism induced by $h$. 
Recall that $\PP^2$ and the irreducible components of $\mcC_1,\mcC_2$
have a natural orientation inherited by the complex structure.
By abuse of notation, we denote the class in $H_2(\PP^2;\ZZ)$ containing an irreducible curve $C$ by the same symbol $C$. 
Using the intersection form, we deduce that $h$ must respect
the orientation of~$\PP^2$.

Let us choose a meridian $\gamma_{1,i}$ of $C_{1,i}$ in some polydisk
$P_{1,i}$ centered at a point $p_{1i}\in C_{1,i}$ such that $p_{1,i}$ is a smooth point of $\mcC_1$. 
Let $p_{2,i}:=\Phi(p_{1,i})$
a smooth point of $\mcC_2$ in $C_{2,i}$. Taking $P_{1,i}$ small enough
we can find a polydisk centered at $p_{2,i}$ as in the previous discussion and such that $h(P_{1,i})\subset P_{2,i}$. Since 
$h(P_{1,i})$ is a neighborhood of $p_{2,i}$, it is possible to
find small enough polydisks $\hat{P}_{1,i}$ (centered at $p_{1,i}$)
and $\hat{P}_{2,i}$ (centered at $p_{2,i}$) such that 
\[
\hat{P}_{2,i}\subset h(\hat{P}_{2,i})\subset h(P_{1,i})\subset
P_{2,i}
\]
and the inclusions $\hat{P}_{1,i}\setminus C_{1,i}\subset {P}_{1,i}\setminus C_i$ and $\hat{P}_{2,i}\setminus C_{2,i}\subset {P}_{2,i}\setminus C_{2,i}$
are homotopy equivalences. Hence, we deal with the following
commutative diagram.
\[
\begin{tikzcd}
H_1(\hat{P}_{i,i}\setminus C_{1,i};\ZZ)\ar[d,"\cong"]&
H_1(\hat{P}_{2,i}\setminus C_{2,i};\ZZ)\ar[l,"h^{-1}_*" above]\ar[d,"\cong"]\\
H_1({P}_{1,i}\setminus C_{1,i};\ZZ)\ar[r,"h_*"]&
H_1({P}_{2,i}\setminus C_{2,i};\ZZ)
\end{tikzcd}
\]
From this diagram we deduce that $h_*(\gamma_{1,i})$
is a generator of $H_1(\hat{P}_{2,i}\setminus C_{2,i};\ZZ)$.
Hence, the image is ${\gamma_{2,i}}^\varepsilon$,
$\varepsilon=\pm1$, where $\gamma_{2,i}$ is a meridian of of $C_{2,i}$.
\end{proof}

\begin{rem}\label{rem:meridian}
Let $\WCB_i=\sum_{j=1}^kC_{ij}$ be the irreducible decomposition of a plane curve $\WCB_i$ for $i=1,2$, and let $\merid_{ij}$ be a meridian of $C_{ij}$. 
If $h:\PP^2\to\PP^2$ is a homeomorphism with $h(C_{1j})=C_{2j}$ for $j=1,\dots,k$, then there exists $\varepsilon\in\{\pm1\}$ such that $h_\ast(\merid_{1j})=\merid_{2j}^\varepsilon$ for all $j=1,\dots,k$. 

Indeed,  by Proposition~\ref{cor:meridian} we know that $h_\ast(\merid_{1j})=\merid_{2j}^{\varepsilon_j}$, $\varepsilon_j=\pm1$.
By Poincar{\'e}-Lefschetz duality we have that
\[
H_1(\PP^2\setminus\WCB_i)=H^3(\PP^2,\WCB_i)=\coker(H^2(\PP^2)\to H^2(\WCB_i))
\]
The homotopy classes of the meridians are sent to the dual of the classes of $C_{ij}$ in $H_2(\WCB_i)$ and the linking numbers
are determined by the signs, hence 
$h_\ast(C_{1j})=\epsilon_j C_{2j}$ as $h_\ast:H_2(\PP^2;\ZZ)\to H_2(\PP^2;\ZZ)$ is induced by $h$. Then for $d_j:=\deg C_{ij}$,
\[ d_{j}d_{j'}=C_{1j}\cdot C_{1j'}=h_\ast(C_{1j})\cdot h_\ast(C_{1j'})=\epsilon_{j}\epsilon_{j'}C_{2j}\cdot C_{2j'}=\epsilon_j\epsilon_{j'}d_jd_{j'} \Longrightarrow \epsilon_j\epsilon_{j'}=1; \]
hence $\epsilon_j=\epsilon_{j'}=\pm1$ for any $j,j'=1,\dots,k$, and $h_\ast(\merid_{1j})=\merid_{2j}^\epsilon$ for some $\epsilon\in\{\pm1\}$ since a canonical orientation of $\merid_{ij}$ is determined by orientations of $\PP^2$ and $C_{ij}$. 
\end{rem}

Put $[\WCC]:=(D;\mcC_1,\dots,\mcC_k)$. 
For $\bm{a}=(a_1,\dots,a_k)\in\Theta_{\bm{d}}^n$, 
let 
\[ \WCB_{\bm{a}}[\WCC]:=\left(a_1-n\left\lfloor\frac{a_1}{n}\right\rfloor\right)\mcC_1+\dots+\left(a_k-n\left\lfloor\frac{a_k}{n}\right\rfloor\right)\mcC_k. \] 
Put $\WCB_{\bm{a}}:=\WCB_{\bm{a}}[\WCC]$. Then we have $\deg\WCB_{\bm{a}}\equiv0\pmod{n}$. 
For $\bm{a}\in\Theta_{\bm{d}}^n$, 
we define a map $\Phi_{[\WCC]}^n:\Theta_{\bm{d}}^n\to\ZZ$ by 
\[ \Phi_{[\WCC]}^n(\bm{a}):=s_{\phi_{\bm{a}}}(D), \]
where $\phi_{\bm{a}}:X_{\bm{a}}\to\PP^2$ is the $\ZZ_n$-cover of type $\WCB_{\bm{a}}$. 
By Lemma~\ref{rem: inverse}, for any $\bm{a}\in\Theta_{\bm{d}}^n$ and any $l\in\ZZ$ with 
$\gcd(l,n)=1$, we obtain 
\[ \Phi_{[\WCC]}^n(\bm{a})=\Phi_{[\WCC]}^n(l\bm{a}). \]
Since an admissible permutation $\rho$ to $([\WCC],[\WCC])$ satisfies $d_{\rho(j)}=d_{j}$ by definition, 
we have 
\begin{align*} 
a_1d_1+\dots+a_kd_k&=
a_1d_{\rho(1)}+\dots+a_kd_{\rho(k)} \\
&=
a_{\rho^{-1}(1)}d_1+\dots+a_{\rho^{-1}(k)}d_k. 
\end{align*}
Hence $\rho$ acts on $\Theta_{\bm{d}}^n$ by $\rho(a_1,\dots,a_k)=(a_{\rho^{-1}(1)},\dots,a_{\rho^{-1}(k)})$.

\begin{prop}\label{prop: splitting map}
Let $\WCB_i$ and $\WCC_i$ $(i=1,2)$ be plane curves
\begin{align*}
   \WCB_i&:=\sum_{j=1}^k \mcC_{i,j}, &  \WCC_i &:=D_i+\WCB_i 
\end{align*}
where $D_i$ is irreducible of degree $d_0$, $\mcC_{i,j}$ are curves of degree $d_j$ for each $i=1,2$. 
If there exists a homeomorphism $h:\PP^2\to\PP^2$ such that $h(D_1)=D_2$ and $h(\mcC_{1,j})=\mcC_{2,\rho(j)}$ $(j=1,\dots,k)$ for a permutation $\rho$ of $k$ letters, 
then $\rho$ is admissible to $([\WCC_1],[\WCC_2])$, and the equation 
\[ \Phi_{[\WCC_1]}^n (\bm{a})=\Phi_{[\WCC_2]}^n\big(\rho(\bm{a})\big) \]
holds for any $n\geq 2$ and any $\bm{a}\in\Theta_{\bm{d}}^n$, where $[\WCC_i]=(D_i;\mcC_{i,1},\dots,\mcC_{i,k})$. 
\end{prop}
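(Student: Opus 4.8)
The plan is to split the assertion into two parts: (i) the combinatorial statement that $\rho$ is admissible to $([\WCC_1],[\WCC_2])$, and (ii) the analytic/topological statement that the splitting-number functions agree after composing with $\rho$. For (i), the homeomorphism $h$ induces an equivalence map $\varphi_h:\comb(\WCC_1)\to\comb(\WCC_2)$ of combinatorial types, and by hypothesis $h(D_1)=D_2$ and $h(\mcC_{1,j})=\mcC_{2,\rho(j)}$, so on the level of irreducible components $(\varphi_h)_{\irr}$ carries $D_1$ to $D_2$ and $\irr_{\mcC_{1,j}}$ to $\irr_{\mcC_{2,\rho(j)}}$. This is exactly the definition of admissibility of $\varphi_h$ to $([\WCC_1],[\WCC_2])$ with $\rho_{\varphi_h}=\rho$, hence $\rho$ is admissible. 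Since $\rho$ is admissible, $d_{\rho(j)}=d_j$ for all $j$, so $\rho$ acts on $\Theta_{\bm{d}}^n$ as recorded just before the statement, and $\Phi_{[\WCC_2]}^n(\rho(\bm{a}))$ makes sense for $\bm{a}\in\Theta_{\bm{d}}^n$.

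For (ii), fix $n\geq 2$ and $\bm{a}=(a_1,\dots,a_k)\in\Theta_{\bm{d}}^n$, and write $\bm{a}'=\rho(\bm{a})=(a_{\rho^{-1}(1)},\dots,a_{\rho^{-1}(k)})$. The key point is that $h$ restricts to a homeomorphism of complements $\PP^2\setminus\WCB_1\to\PP^2\setminus\WCB_2$ compatible with the decompositions, and by Remark~\ref{rem:meridian} there is a global sign $\varepsilon\in\{\pm1\}$ with $h_\ast(\merid_{1,j})=\merid_{2,\rho(j)}^{\varepsilon}$ for all $j$ (applying the remark to the pairing induced by $h$ between the components $\mcC_{1,j}$ and $\mcC_{2,\rho(j)}$; the common-sign argument there goes through verbatim using $d_{\rho(j)}=d_j$). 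Now the surjection $\theta_{\bm{a}}:\pi_1(\PP^2\setminus\WCB_1)\twoheadrightarrow\ZZ_n$ defining $\phi_{\bm{a}}$ sends $\merid_{1,j}\mapsto[a_j]$, so $\theta_{\bm{a}}\circ h_\ast^{-1}$ sends $\merid_{2,\rho(j)}\mapsto[\varepsilon a_j]=[\varepsilon a_{\rho^{-1}(i)}]$ on the index $i=\rho(j)$; that is, $\theta_{\bm{a}}\circ h_\ast^{-1}$ is the surjection associated with $\varepsilon\bm{a}'\in\Theta_{\bm{d}}^n$. Thus $h$ lifts to a homeomorphism $\tilde h:X_{\bm{a}}\to X_{\varepsilon\bm{a}'}$ over $h$ (pull back the $\ZZ_n$-cover $\phi_{\varepsilon\bm{a}'}$ along $h$ and identify it with $\phi_{\bm{a}}$ via the equality of associated surjections), and $\tilde h$ carries $\phi_{\bm{a}}^\ast D_1$ onto $\phi_{\varepsilon\bm{a}'}^\ast D_2$ since $h(D_1)=D_2$; in particular it induces a bijection of irreducible components, so $s_{\phi_{\bm{a}}}(D_1)=s_{\phi_{\varepsilon\bm{a}'}}(D_2)$, i.e. $\Phi_{[\WCC_1]}^n(\bm{a})=\Phi_{[\WCC_2]}^n(\varepsilon\bm{a}')$.

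Finally, to remove the sign $\varepsilon$: if $\varepsilon=1$ we are done. If $\varepsilon=-1$, then $-\bm{a}'$ and $\bm{a}'$ differ by multiplication by $l=-1$, which satisfies $\gcd(l,n)=1$, so by Lemma~\ref{rem: inverse} (equivalently, by the displayed consequence $\Phi_{[\WCC_2]}^n(\bm{b})=\Phi_{[\WCC_2]}^n(l\bm{b})$ recorded after its proof) we have $\Phi_{[\WCC_2]}^n(-\bm{a}')=\Phi_{[\WCC_2]}^n(\bm{a}')$. Combining, $\Phi_{[\WCC_1]}^n(\bm{a})=\Phi_{[\WCC_2]}^n(\rho(\bm{a}))$ in all cases. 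The main obstacle is the middle step: carefully checking that $h$ genuinely lifts to the total spaces as a fiber-preserving homeomorphism — this requires matching the two defining surjections to $\ZZ_n$ under $h_\ast$ (which is where the global sign $\varepsilon$ and the relation $d_{\rho(j)}=d_j$ enter) and then invoking the uniqueness of cyclic covers determined by a surjection, essentially as in \cite[Proposition~1.3]{shirane2016}; once the cover upstairs is identified, compatibility with $D$ and hence equality of splitting numbers is immediate.
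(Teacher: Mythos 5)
Your proposal is correct and follows essentially the same route as the paper's proof: admissibility is read off directly from the hypothesis on $h$, Remark~\ref{rem:meridian} supplies the global sign $\varepsilon$, the cover upstairs is identified via \cite[Proposition~1.3]{shirane2016} to equate splitting numbers, and Lemma~\ref{rem: inverse} (with $l=-1$) removes the sign. Your write-up is somewhat more explicit about the lifting step, but the structure and ingredients coincide with the paper's argument.
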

\begin{proof}
Since the permutation $\rho$ satisfies $h(\mcC_{1,j})=\mcC_{2,\rho(j)}$, 
$\rho$ is admissible to the pair $([\WCC_1],[\WCC_2])$. 
In particular, $\rho$ acts on $\Theta_{\bm{d}}^n$ by 
\[ \rho(a_1,\dots,a_k)=\left(a_{\rho^{-1}(1)},\dots,a_{\rho^{-1}(k)}\right). \]
Fix $n\geq 2$. 
For $\bm{a}\in\Theta_{\bm{d}}^n$, 
let $\phi_{i,\bm{a}}:X_{i,\bm{a}}\to\PP^2$ be the $\ZZ_n$-cover of type $\WCB_{i,\bm{a}}:=\WCB_{\bm{a}}[\WCC_i]$. 
Let $h_\ast:H_2(\PP^2;\ZZ)\to H_2(\PP^2;\ZZ)$ be the isomorphism induced by $h$. 
By Remark~\ref{rem:meridian}, 
\begin{align*}
	h_\ast(a_1\mcC_{1,1}+\dots+a_k\mcC_{1,k})&=\varepsilon\left(a_1\mcC_{2,\rho(1)}+\dots+a_k\mcC_{2,\rho(k)} \right)
	\\
	&=\varepsilon\left(a_{\rho^{-1}(1)}\mcC_{2,1}+\dots+a_{\rho^{-1}(k)}\mcC_{2,k}\right)
\end{align*}
for $\varepsilon=\pm1$, where $\mcC_{i,j}$ is regarded as the class represented by $\mcC_{i,j}$ with the orientation induced by the complex structure for each $i,j$. 
Hence we have $s_{\phi_{1,\bm{a}}}(D_1)=s_{\phi_{2,\rho(\bm{a})}}(D_2)$ by \cite[Proposition~1.3]{shirane2016} and Lemma~\ref{rem: inverse}. 
Therefore, we obtain $\Phi_{[\WCC_1]}^n=\Phi_{[\WCC_2]}^n\circ\rho$. 
\end{proof}

Again we consider the plane curve $\WCC:=D+\WCB$ as above. 
Put $n:=n_{[\WCC]}$.  
Let $\phi_{\bm{a}}:X_{\bm{a}}\to\PP^2$ be the $\ZZ_n$-cover 
of type~$\WCB_{\bm{a}}:=\WCB_{\bm{a}}[\WCC]$ for $\bm{a}\in\Theta_{\bm{d}}^n$. 
We consider the following divisors on $D$: $\fo_D:=L|_D$ for some line~$L$,
and:
\[ 
\fd_{\bm{a}}[\WCC]:=\sum_{P\in\WCB_{\bm{a}}\cap D}\frac{(\WCB_{\bm{a}}\cdot D)_P}{n}P,\qquad 
\left(\deg\fd_{\bm{a}}[\WCC]=\frac{d_0}{n}\sum_{j=1}^k a_j d_j\right). 
\]
Then $\fd_{\bm{a}}:=\fd_{\bm{a}}[\WCC]$ is an element of $\Div(D)_{d_0}$ since $\deg\WCB_{\bm{a}}\equiv0\pmod{n}$ by definition of $\Theta_{\bm{d}}^n$. 
Let $\nu_{\bm{a}}:=\nu_{\bm{a}}[\WCC]$ be the positive integer such that $\fd_{\bm{a}}$ is a $\nu_{\bm{a}}$-torsion divisor with respect to $\fo_D$. 

\begin{prop}\label{prop:key}
In the above setup, 
the following equation holds: 
\[ s_{\phi_{\bm{a}}}(D)=\frac{n}{\nu_{\bm{a}}}\,.\]
\end{prop}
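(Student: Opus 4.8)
The plan is to relate the splitting number of $D$ in the $\ZZ_n$-cover $\phi_{\bm{a}}$ directly to the order of the torsion class $\psi_{\fo_D}(\fd_{\bm{a}})\in\pic^0(D)$. First I would set $m:=\deg\WCB_{\bm{a}}$, which is divisible by $n$ by the definition of $\Theta_{\bm{d}}^n$, and recall from the construction in Subsection~\ref{subsec:hom_phi} that since $\fd_{\bm{a}}$ is a $\nu_{\bm{a}}$-torsion divisor with respect to $\fo_D$, there is a homogeneous polynomial $g$ of degree $\frac{\nu_{\bm{a}}m}{n}\cdot\frac{1}{d_0}\cdot d_0=\frac{\nu_{\bm{a}}m}{n}$ (adjusting the bookkeeping via $\deg\fd_{\bm{a}}=\frac{d_0}{n}\sum a_j d_j$) cutting out $\nu_{\bm{a}}\fd_{\bm{a}}$ on $D$, i.e. a curve $C_g$ with $C_g|_D=\nu_{\bm{a}}\fd_{\bm{a}}$; and $\nu_{\bm{a}}$ is minimal with this property among positive integers realized by plane curves of the appropriate degree.

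The core of the argument is the standard description of the pullback of an irreducible curve under a cyclic cover. The $\ZZ_n$-cover $\phi_{\bm{a}}:X_{\bm{a}}\to\PP^2$ of type $\WCB_{\bm{a}}$ can be realized, after normalization, from the equation $z^n=F$, where $F$ is (a product of powers matching the coefficients $a_j - n\lfloor a_j/n\rfloor$ of) the defining equations of the $\mcC_j$. Restricting to $D$, the cover $\phi_{\bm{a}}^*D\to D$ is the normalization of the curve in the total space of $\mcO_D(\tfrac{m}{n}\fo_D)$ given by $w^n = f|_D$, where $(f|_D)=\WCB_{\bm{a}}|_D = n\fd_{\bm{a}}$ as divisors. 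Thus $\phi_{\bm{a}}^*D$ is the cyclic cover of $D$ of degree $n$ branched according to the divisor $n\fd_{\bm{a}}$, which is "totally divisible by $n$"; such a cover is built from the line bundle $\mcL:=\mcO_D(\fd_{\bm{a}}-\tfrac{m}{nd_0}\cdot\text{(stuff)})$—more precisely from the class $\psi_{\fo_D}(\fd_{\bm{a}})\in\pic^0(D)$ together with an $n$-th root structure. The number of connected components of this cyclic cover equals $n/\nu$, where $\nu$ is the order of the class $\psi_{\fo_D}(\fd_{\bm{a}})$ in $\pic^0(D)$: indeed the connected component group of a cyclic cover $w^n=f$ is $\ZZ_n / d\ZZ_n$ where $d$ is the largest divisor of $n$ for which the associated class is a $d$-th power (equivalently, $n/\nu$ with $\nu$ the order of the class). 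Since $D$ is smooth and hence irreducible, $s_{\phi_{\bm{a}}}(D)$ is exactly this number of connected components, giving $s_{\phi_{\bm{a}}}(D)=n/\nu_{\bm{a}}$ once we identify $\nu_{\bm{a}}$ (the torsion order as defined via $\psi_{\fo_D}$) with the order of $\psi_{\fo_D}(\fd_{\bm{a}})$ in $\pic^0(D)$, which is immediate from the definition of $\nu$-torsion divisor.

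Concretely I would carry out the steps in this order: (1) record that $\phi_{\bm{a}}^{-1}(D)$, with its $\ZZ_n$-action, is the degree-$n$ cyclic cover of $D$ determined by the pair $(\mcO_D(\fd_{\bm{a}}-\tfrac{\deg\fd_{\bm{a}}}{d_0}\fo_D),\ \text{section of its }n\text{-th power cutting out }0)$—this uses the divisibility $\WCB_{\bm{a}}|_D=n\fd_{\bm{a}}$; (2) invoke the general fact that a degree-$n$ cyclic cover of an irreducible smooth curve associated to a torsion class $\mathfrak c$ of order $\nu$ in $\pic^0$ splits into exactly $n/\nu$ irreducible (=connected, since smooth) components, each a $\ZZ_\nu$-cover — this can be proved by passing to the index-$(n/\nu)$ subgroup of $\ZZ_n$ for which the class becomes trivial, or by a direct count using that $H^0$ of the relevant pushforward decomposes as $\bigoplus_{i} H^0(D,\mcO_D(-i\mathfrak c + \ldots))$ with exactly $n/\nu$ of the summands containing the constants; (3) match $\nu$ with $\nu_{\bm{a}}$ via the definition of $\nu_{\bm{a}}$-torsion with respect to $\fo_D$. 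The main obstacle is step (1)/(2): being careful that the cover $\phi_{\bm{a}}$ as defined topologically (via $\theta:\pi_1\twoheadrightarrow\ZZ_n$) agrees, after restriction to $D$ and normalization, with the algebraic cyclic cover attached to $\psi_{\fo_D}(\fd_{\bm{a}})$, and correctly handling the twist by $\fo_D$ so that the degrees and the line bundle bookkeeping are consistent; the splitting-count for cyclic covers of curves is classical but the identification of the topological monodromy with the algebraic one—especially tracking that meridians of $\mcC_j$ map to $[a_j]$ and that this forces the branch divisor on $D$ to be $n\fd_{\bm{a}}$—needs to be stated cleanly.
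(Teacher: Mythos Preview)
Your approach is correct and essentially amounts to proving from scratch the result that the paper imports as a black box. The paper's proof is two lines: it records (via Subsection~\ref{subsec:hom_phi}) that there is a plane curve $C$ with $C|_D=\nu_{\bm{a}}\fd_{\bm{a}}$ and that $\nu_{\bm{a}}$ is the minimal positive integer with this property, and then cites \cite[Theorem~2.1]{ben-shi} to conclude $s_{\phi_{\bm{a}}}(D)=n/\nu_{\bm{a}}$. You instead unwind the cyclic cover explicitly: restricting $\phi_{\bm{a}}$ to $D$ and normalizing yields the {\'e}tale $\ZZ_n$-cover of $D$ attached to the class $\psi_{\fo_D}(\fd_{\bm{a}})\in\pic^0(D)[n]$ (this uses $\WCB_{\bm{a}}|_D=n\fd_{\bm{a}}$, which holds because $n=n_{[\WCC]}$ divides every local intersection number), and such a cover has exactly $n/\ord(\psi_{\fo_D}(\fd_{\bm{a}}))=n/\nu_{\bm{a}}$ connected components via the decomposition $\bigoplus_{i=0}^{n-1}\mcO_D\big(i(\fd_{\bm{a}}-\tfrac{m}{n}\fo_D)\big)$. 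Your route is more self-contained and makes the role of the torsion class transparent; the paper's route is shorter but relies on the external splitting-number formula of Guerville-Ball\'e and Shirane, which is proved by essentially the same mechanism you describe.
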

\begin{proof}
Since $\nu_{\bm{a}}(\fd_{\bm{a}}-\frac{\deg\WCB_{\bm{a}}}{n}\fo)\sim 0$ by the hypothesis, there is a divisor $C$ on $\PP^2$ such that $C|_D=\nu_{\bm{a}}\fd_{\bm{a}}$ as divisors on $D$ by the arguments of Subsection \ref{subsec:hom_phi} . As $\nu_{\bm{a}}$ is the minimal positive integer 
with this property, by \cite[Theorem~2.1]{ben-shi}, we obtain $s_{\phi_{\bm{a}}}(D)=\frac{n}{\nu_{\bm{a}}}$. 
\end{proof}

We next prove a corollary of Propositions~\ref{prop: splitting map} and \ref{prop:key}, which gives a method of distinguishing embedded topology of plane curves. 
Let $\WCC_i:=D_i+\sum_{j=1}^k \mcC_{i,j}$ $(i=1,2)$ be two plane curves, 
where $D_i$ are smooth, and $\mcC_{i,j}$ are of degree~$d_i$, and put $[\WCC_i]:=(D_i;\mcC_{i,1},\dots,\mcC_{i,k})$. 
Assume that there is an equivalence map $\varphi:\comb(\WCC_1)\to\comb(\WCC_2)$ admissible to $([\WCC_1],[\WCC_2])$. 
Then we can put
\[
n:=n_{[\WCC_1]}=n_{[\WCC_2]}. 
\]
For $\bm{a}\in\Theta_{\bm{d}}^n$, 
let $\phi_{i,\bm{a}}:X_{i,\bm{a}}\to\PP^2$ be the $\ZZ_n$-cover of type $\WCB_{i,\bm{a}}:=\WCB_{\bm{a}}[\WCC_i]$. 
The next corollary follows from Proposition~\ref{prop: splitting map} and \ref{prop:key}. 

\begin{cor}\label{cor:key}
Let $\WCC_1$ and $\WCC_2$ be two plane curves as above. 
Let $\rho$ be an admissible permutation with respect to $\bm{d}:=(d_1,\dots,d_k)$, 
If there is an element $\bm{a}\in\Theta_{\bm{d}}^n$ such that $\nu_{\bm{a}}[\WCC_1]\ne\nu_{\rho(\bm{a})}[\WCC_2]$, then there exists no homeomorphism $h:\PP^2\to\PP^2$ such that $h(D_1)=D_2$ and $h(\mcC_{1j})=\mcC_{2\rho(j)}$. 
\end{cor}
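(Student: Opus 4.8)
The plan is a short proof by contradiction that chains together the two preceding propositions. Assume, for contradiction, that there is a homeomorphism $h:\PP^2\to\PP^2$ with $h(D_1)=D_2$ and $h(\mcC_{1j})=\mcC_{2\rho(j)}$ for all $j=1,\dots,k$. Since a smooth plane curve is irreducible, the hypotheses of Proposition~\ref{prop: splitting map} are met by $h$ and $\rho$; it tells us that $\rho$ is admissible to $([\WCC_1],[\WCC_2])$ and that $\Phi_{[\WCC_1]}^n(\bm{a})=\Phi_{[\WCC_2]}^n(\rho(\bm{a}))$ for every $n\ge2$ and every $\bm{a}\in\Theta_{\bm{d}}^n$. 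A preliminary remark to insert here is that $\rho(\bm{a})$ does belong to $\Theta_{\bm{d}}^n$: this is exactly the computation carried out right before Proposition~\ref{prop: splitting map}, using $d_{\rho(j)}=d_j$ (which preserves the congruence $\sum_j a_j d_j\equiv0\pmod n$) together with the fact that $\rho$ merely permutes coordinates (which preserves $\gcd(a_1,\dots,a_k,n)=1$).

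Next I would specialize to $n:=n_{[\WCC_1]}=n_{[\WCC_2]}$ — the equality provided by the assumed admissible equivalence map $\varphi:\comb(\WCC_1)\to\comb(\WCC_2)$ — and to the particular $\bm{a}\in\Theta_{\bm{d}}^n$ given in the hypothesis. Unwinding the definition of $\Phi_{[\WCC_i]}^n$, Proposition~\ref{prop: splitting map} yields $s_{\phi_{1,\bm{a}}}(D_1)=s_{\phi_{2,\rho(\bm{a})}}(D_2)$. Now apply Proposition~\ref{prop:key} on each side: with base integer $n$ and the divisor $\fd_{\bm{a}}[\WCC_1]$, which by construction is a $\nu_{\bm{a}}[\WCC_1]$-torsion divisor with respect to $\fo_{D_1}$, it gives $s_{\phi_{1,\bm{a}}}(D_1)=n/\nu_{\bm{a}}[\WCC_1]$; similarly $s_{\phi_{2,\rho(\bm{a})}}(D_2)=n/\nu_{\rho(\bm{a})}[\WCC_2]$. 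Equating the two splitting numbers and cancelling $n$ forces $\nu_{\bm{a}}[\WCC_1]=\nu_{\rho(\bm{a})}[\WCC_2]$, contradicting the hypothesis; hence no such $h$ exists.

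There is essentially no hard step here, since each link in the chain is a direct application of an already-established result. The only two points deserving a line of care are: (i) checking $\rho(\bm{a})\in\Theta_{\bm{d}}^n$, so that the cover $\phi_{2,\rho(\bm{a})}$ and hence the torsion order $\nu_{\rho(\bm{a})}[\WCC_2]$ are even defined; and (ii) ensuring that one single integer $n$ is legitimately used on both the $\WCC_1$ and the $\WCC_2$ side when invoking Proposition~\ref{prop:key} — which is precisely why the statement builds in the hypothesis $n_{[\WCC_1]}=n_{[\WCC_2]}$ via the admissible equivalence map.
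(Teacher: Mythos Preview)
Your proof is correct and is exactly the argument the paper has in mind: the paper simply states that the corollary ``follows from Proposition~\ref{prop: splitting map} and \ref{prop:key}'' without writing out the chain, and you have supplied precisely that chain, including the two small checks (that $\rho(\bm{a})\in\Theta_{\bm{d}}^n$ and that a common $n$ is in force) which the surrounding text sets up.
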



The main results in Section~\ref{sec:main} follow from Corollary~\ref{cor:key}.

\begin{proof}[Proof of Proposition{\rm~\ref{prop:n-torsion}}]
Put $\ft_{i,j}:=\ft_j[\WCC_i]$. 
Suppose that $\ker\tau_{[\WCC_1]}\not\subset\ker\tau_{[\WCC_2]}\circ\rho$. 
Then there is an element $(a_1^\prime,\dots,a_k^\prime)\in\ZZ^{\oplus k}$ such that 
\[
a_{1}^\prime\ft_{1,1}+\dots+a_{k}^\prime\ft_{1,k}=0\in G[\WCC_1], \qquad 
a_{\rho^{-1}(1)}^\prime\ft_{2,1}+\dots+a_{\rho^{-1}(k)}^\prime\ft_{2,k}\neq 0\in G[\WCC_2]. 
\]
Since $n\ft_{i,j}=0$, we may assume that $0\leq a_j^\prime\leq n-1$ for each $j=1,\dots,k$. 
Let
\[
\bm{a}:=(a_1,\dots,a_k)=\frac{(a'_1,\dots,a'_k)}{\gcd(a'_1,\dots,a'_k)}.
\]
Since $n$ is a divisor of $\gcd(d_1,\dots,d_k)$, we have $\bm{a}:=(a_1,\dots,a_k)\in\Theta_{\bm{d}}^n$. 
Moreover, we have 
\[ \ord\left(a_{1}\ft_{1,1}+\dots+a_{k}\ft_{1,k}\right)\ne\ord\left(a_{\rho^{-1}(1)}\ft_{2,1}+\dots+a_{\rho^{-1}(k)}\ft_{2,k}\right). \]
Since the above orders correspond to $\nu_{\bm{a}}[\WCC_1]$ and $\nu_{\rho(\bm{a})}[\WCC_2]$ in Corollary~\ref{cor:key}, there exists no homeomorphism $h:\PP^2\to\PP^2$ such that $h(D_1)=D_2$ and $h(\mcC_{1,j})=\mcC_{2,\rho(j)}$ by Corollary~\ref{cor:key}. 
In the case of $\ker\tau_{[\WCC_1]}\not\supset\ker\tau_{[\WCC_2]}\circ\rho$, the assertion can be proved by the same way. 
\end{proof}

We next prove Theorem~\ref{thm:n-torsion}.

\begin{proof}[Proof of Theorem{\rm~\ref{thm:n-torsion}}]
Suppose there exists a homeomorphism of topological pairs $h\!:(\PP^2,\WCC_1)\to(\PP^2,\WCC_2)$. 
By the assumption, we have $h(D_1)=D_2$ and $h(\mcC_{1,j})=\mcC_{2,\rho(j)}$ for some admissible permutation $\rho$ with respect to $\bm{d}$. 
This is a contradiction to Proposition~\ref{prop:n-torsion}. 
\end{proof}

We prove Corollary~\ref{cor:n-torsion}. 

\begin{proof}[Proof of Corollary{\rm~\ref{cor:n-torsion}}]
Parts 
\ref{cor:n-torsion-iii} and \ref{cor:n-torsion-i} are clear. 
Let us prove \ref{cor:n-torsion-ii}. 
Suppose that there is a homeomorphism $h : (\PP^2, \WCC_1) \to (\PP^2, \WCC_2)$. 
By the assumption, there exists a permutation $\rho$ such that $\varphi_{h,\irr}(\irr_{\mcC_{1,j}})=\irr_{\mcC_{2,\rho(j)}}$ for $j=1,\dots,k$, i.e., $h(\mcC_{1,j})=\mcC_{2,\rho(j)}$. 
In particular, $h$ gives a homeomorphism $(\PP^2,D_1+\mcC_{1,j})\to(\PP^2,D_2+\mcC_{2,\rho(j)})$ for each $j$. 
However, there is $j_0$ such that $\ord(\ft_{1,j_0})\ne\ord(\ft_{2,\rho(j_0)})$ by the assumption. 
By \ref{cor:n-torsion-i}, the homeomorphism $h$ as above does not exist. 
\end{proof}

\section{Remarks on previous examples}\label{sec:remarks}

We here explain two previous examples where $D$ is a smooth cubic $E$ from our new viewpoint in this article.
For a smooth cubic $E$, there is a classically well-known description on $\pic^0(E)$. In 
\cite{cubic_torsion}, we reformulate our setting in this case and discuss our problem in detail.

\subsection{\texorpdfstring{$3$}{3}-Artal arrangements}

A $k$-Artal arrangement is a reduced plane curve whose irreducible  components are a smooth cubic curve and  $k$ of its tangent lines at inflection points.
A Zariski pair for $3$-Artal arrangements is given in \cite{artal94} and those for $k$-Artal arrangements are
considered in \cite{bgst}. We here
consider $3$-Artal arrangements based on our approach. Note that this is essentially considered in \cite{bgst} from
the viewpoint of {\it splitting numbers}. 

Let $E$ be a smooth cubic curve and let $L_i$ $(i = 1,2, 3)$ be a tangent lines at inflection points $P_i$ ($i = 1, 2, 3$). Put $\triangle := L_1 + L_2 +L_3$, 
$\WCC:=E+\triangle$ and $[\WCC]:=(E;\triangle)$.  
We now choose a line $L$ and put $\fo_E := L|_E$. In this case, 
\[
\fd_{\triangle}:= \fd_1[\WCC] = P_1 + P_2 +P_3, \qquad 
\ft_{\triangle}:=\ft_1[\WCC] = \fd_\triangle - \fo_E. 
\]
If $P_1, P_2$ and $P_3$ are collinear, i.e., there exists a line  $L_{\triangle}$ such that
$L_{\triangle}|_E = P_1 + P_2 +P_3$, we have $\ft_\triangle = 0$ in $\pic^0(E)$. 
On the other hand,
if $P_1, P_2$ and $P_3$ are not collinear, $\ft_\triangle$ is a $3$ torsion as an element of $\pic^0(E)$. 
Hence
we choose $\triangle_i:=L_{i1}+L_{i2}+L_{i3}$ ($i=1,2$), where $L_{ij}$ are inflectional tangents of $E$, such that 
\begin{enumerate}
	\item both of $\triangle_i$ ($i = 1, 2)$ are not concurrent three lines, 
	\item the three inflection points for $\triangle_1$ are collinear, and 
	\item the three inflection points for $\triangle_2$ are not collinear. 
\end{enumerate}
Then $\WCC_i:=E+\triangle_{i}$ ($i=1,2$) have the same combinatorics, and 
any equivalence map $\varphi:\comb(\WCC_1)\to\comb(\WCC_2)$ is admissible to $([\WCC_1],[\WCC_2])$ since $\deg\varphi_{\irr}(C_1)=\deg C_1$ for each $C_1\in\irr_{\WCC_1}$. 
Therefore $(\WCC_1, \WCC_2)$ is a
Zariski pair by Corollary~\ref{cor:n-torsion}~\ref{cor:n-torsion-i}.

\subsection{A smooth cubic and \texorpdfstring{$4$}{4} tangent lines}
In \cite{bannai-tokunaga2019}, we study Zariski tuples for a smooth cubic $E$ and its tangent lines. We distinguish
the embedded topology by counting the number of basic subarrangements consisting of $E$ and  $4$ of its tangent
lines as follows:

Choose two distinct points $P_1$ and $P_2$ of $E$. For each $P_j$ $(j=1,2)$, we choose two tangent lines $L_{P_j, 1}$ and
$L_{P_j, 2}$ tangent at $Q_{j, 1}$ and $Q_{j, 2}$, respectively. Put
\[
\Lambda_j = L_{P_j, 1} + L_{P_j, 2}.
\]
We consider plane curves of the form $\WCC:=E+\Lambda_1+\Lambda_2$, and put $[\WCC]:=(E;\Lambda_1,\Lambda_2)$. 
Then we have
\[
\fd_{\Lambda_i} := \fd_j[\WCC] = P_j + Q_{j, 1} + Q_{j, 2}.
\]
Now we choose a line $L$ and put $\fo_E := L|_E$. Since $P_j, Q_{j, 1}, Q_{j, 2}$ cannot be
collinear, 
\[
\ft_{\Lambda_j} := \ft_j[\WCC]
= P_j + Q_{j, 1} + Q_{j, 2} - \fo_E \not\sim 0,
\]
Hence $\ft_{\Lambda_j}\in\pic^0(E)[2]$ as $2\ft_{\Lambda_j} = 2\fd_{\Lambda_j}-2\fo_E = 0$ in $\pic^0(E)$. 
Note that $\ft_{\Lambda_j}$ 
depends on the choice of $L_{P_j, k}$ $(k = 1, 2)$ as
there exist $4$ tangent lines passing through $P_j$.  Choose $\{\Lambda_1, \Lambda_2\}$ and
$\{\Lambda'_1, \Lambda'_2\}$ such that $\ft_{\Lambda_j}$ and $\ft_{\Lambda_j'}$ as elements of $\pic^0(E)$ satisfy 
\begin{center}
$\ft_{\Lambda_1} = \ft_{\Lambda_2}$ \quad and \quad $\ft_{\Lambda'_1} \neq \ft_{\Lambda'_2}$ \quad
in \quad $\pic^0(E)[2]\cong\ZZ_2\oplus\ZZ_2$. 
\end{center}
Put 
\begin{align*}
	\WCC&:=E+\Lambda_1+\Lambda_2, \qquad 
	 [\WCC]:=(E;\Lambda_1,\Lambda_2), \\
	\WCC'&:=E+\Lambda_1'+\Lambda_2', \qquad
	[\WCC']:=(E;\Lambda_1',\Lambda_2'). 
\end{align*}
Then  we obtain $G[\WCC]\cong\ZZ_2$ and $G[\WCC']\cong\ZZ_2\oplus\ZZ_2$, and $\WCC$ and $\WCC'$ have the same combinatorics. 

Let $\varphi:\comb(\WCC)\to\comb(\WCC')$ be any equivalence map. 
We prove that $\varphi$ is admissible to $([\WCC],[\WCC'])$. 
Put $\Lambda_j':=L_{P_j',1}+L_{P_j',2}$, and $E\cap L_{P_j',k}=\{P_j',Q'_{j,k}\}$, where $Q'_{j,k}$ is the tangent point of $E$ and $L_{P'_{j},k}$. 
Note that 
\begin{align*} 
\sing_{\WCC}=\{P_{1},P_2,Q_{1,1},Q_{1,2},Q_{2,1},Q_{2,2}\}, \\ 
\sing_{\WCC'}=\{P_{1}',P_2',Q_{1,1}',Q_{1,2}',Q_{2,1}',Q_{2,2}'\}. 
\end{align*}
Since $\deg E>1$ for any $j,k$, we have $\varphi_{\irr}(E)=E$. 
Since $\sharp\WCC(P_j)=\sharp\WCC'(P_j')=3$ and $\sharp\WCC(Q_{j,k})=\sharp\WCC'(Q_{j,k}')=2$ for each $j,k$, $\varphi_{\sing}(P_j)=P'_{\rho(j)}$ ($j=1,2$) for a certain permutation $\rho$ of two letters. 
Moreover, there is a permutation $\rho'$ of two letters such that $\varphi_{\sing}(Q_{j,k})=Q_{\rho(j),\rho'(k)}'$ since $\varphi$ satisfies $\beta_{\mcC_2,\varphi_{\sing}(P)}\circ\varphi_P=\varphi_{\irr\circ\beta_{\mcC_1,P}}$. 
Hence we have $\varphi_{\irr}(\irr_{\Lambda_j})=\irr_{\Lambda_{\rho(j)}'}$, and $\varphi$ is admissible to $([\WCC],[\WCC'])$. 
Therefore, 
by Corollary~\ref{cor:n-torsion}~\ref{cor:n-torsion-iii}, $(\WCC,\WCC')$ is a Zariski pair.

\section{Generalizations of Shimada's curves  and \texorpdfstring{$n$}{n}-tor\-sion divisors}\label{sec:example}

In this section, we construct certain plane curves consisting of two smooth curves, which are generalizations of Shimada's curves in \cite{shimada2003}. 
\begin{defin}
Let $\WCC:=D+C$ be a plane curve, where $D$ and $C$ are smooth curves of degrees $d_0$ and $d_1$ with $d_1\geq d_0$. 
For a divisor $n$ of $d_1$, 
we call $\mcC$ a \emph{plane curve of type $(d_0,d_1;n)$}  if 
the local intersection number $(D\cdot C)_P=n$ for each $P\in C\cap D$. 	
\end{defin}

\begin{rem}
A plane curve of type $(3,d_1;n)$ is called a plane curve of type $(d_1,n)$ by Shimada in \cite{shimada2003}. 
\end{rem}

Let $\WCC:=D+C$ be a plane curve of type $(d_0,d_1;n)$, and put $[\WCC]:=(D;C)$. 
The divisors $\fo_D$ and $\fd:=\fd_1[\WCC]$ on $D$ are defined as follows: 
\begin{align*} 
\fo_D &:=L|_{D}, \quad \fd :=\sum_{P\in C\cap D} P, 
\end{align*}
where $L$ is a general line on $\PP^2$. 
Since $n\fd=C|_{D}$ as a divisor on $D$, we have that $n\fd\sim d_1\fo_D$,
$n(\fd-\frac{d_1}{n}\fo_D)$ vanishes in $\pic^0(D)$ and the order of $\fd-\frac{d_1}{n}\fo_D$
is a divisor of~$n$. 

\begin{defin}\label{def:order_2smooth}
Let $\WCC:=D+C$ be a plane curve of type $(d_0,d_1;n)$. 
We call $\WCC$ a \textit{plane curve of type $(d_0,d_1;n,\nu)$} 
if the torsion order of $\fd-\frac{d_1}{n}\fo_D$ is exactly~$\nu$. 
\end{defin}

By Corollary~\ref{cor:n-torsion}, we obtain the following corollary. 

\begin{cor}\label{cor:example}
Let $\WCC_i:=D_i+C_i$ $(i=1,2)$ be plane curves of type $(d_0,d_1;n,\nu_i)$. 
Assume that $\WCC_1$ and $\WCC_2$ satisfy the following conditions: 
\begin{enumerate}
	\item $\deg D_i=d_0$ and $\deg C_i=d_1$ for $i=1,2$, 
	\item $d_0<d_1$, and
	\item $\nu_1\ne\nu_2$.
\end{enumerate}
Then $(\WCC_1,\WCC_2)$ is a Zariski pair. 
\end{cor}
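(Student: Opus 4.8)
The plan is to deduce Corollary~\ref{cor:example} as a direct application of Corollary~\ref{cor:n-torsion}~\ref{cor:n-torsion-i}, so the work is mostly in checking that the hypotheses of that corollary (and hence of Theorem~\ref{thm:n-torsion} and Proposition~\ref{prop:n-torsion}) are satisfied in the present two-component situation. First I would set $k=1$, with the decompositions $[\WCC_i]:=(D_i;C_i)$, and observe that $n_{[\WCC_1]}=n_{[\WCC_2]}=n$: indeed, both curves are of type $(d_0,d_1;n)$, so every local intersection number $(C_i\cdot D_i)_P$ equals $n$ and $\deg C_i=d_1$, whence the gcd defining $n_{[\WCC_i]}$ is $\gcd(n,\dots,n,d_1)=n$ because $n\mid d_1$. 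Then $\fd_1[\WCC_i]=\sum_{P\in C_i\cap D_i}P$ and $\ft_{i,1}:=\ft_1[\WCC_i]=\fd_1[\WCC_i]-\frac{d_1}{n}\fo_{D_i}$, which by Definition~\ref{def:order_2smooth} has order exactly $\nu_i$ in $\pic^0(D_i)[n]$.

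Next I would verify that $\WCC_1$ and $\WCC_2$ have the same combinatorics. Each $\WCC_i$ consists of a smooth curve of degree $d_0$ and a smooth curve of degree $d_1$ meeting at $\frac{d_0 d_1}{n}$ points, at each of which the local intersection has type $(D_i\cdot C_i)_P=n$; since two smooth branches meeting with intersection multiplicity $n$ at a point have a fixed topological (indeed analytic) type, the singular points of $\WCC_1$ and $\WCC_2$ are in bijection preserving topological type, and the degrees and genera of the two components agree. Hence the dual graphs of the minimal embedded resolutions coincide and there is an equivalence map $\varphi:\comb(\WCC_1)\to\comb(\WCC_2)$.

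Then I would check that every equivalence map $\varphi:\comb(\WCC_1)\to\comb(\WCC_2)$ is admissible to $([\WCC_1],[\WCC_2])$, which for $k=1$ reduces to showing $\varphi_{\irr}(D_1)=D_2$. This is where the hypothesis $d_0<d_1$ enters: since $\varphi_{\irr}$ preserves degrees of irreducible components and $\deg D_i=d_0\ne d_1=\deg C_i$, the only possibility is $\varphi_{\irr}(D_1)=D_2$ (and $\varphi_{\irr}(C_1)=C_2$); the unique permutation $\rho$ of one letter is then admissible. Finally, since $k=1$ and $\ord(\ft_{1,1})=\nu_1\ne\nu_2=\ord(\ft_{2,1})$, Corollary~\ref{cor:n-torsion}~\ref{cor:n-torsion-i} applies and gives that $(\WCC_1,\WCC_2)$ is a Zariski pair.

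The main obstacle, such as it is, is the combinatorial bookkeeping: one must be careful that the condition $d_0<d_1$ (strict inequality, as opposed to $d_1\ge d_0$ in the definition of type $(d_0,d_1;n)$) is exactly what rules out an equivalence map swapping the two components, which would otherwise leave room for the torsion order to be computed on the wrong curve; and one should note that $n\ge 2$ is implicitly needed for the torsion statement to be meaningful, which holds whenever $\nu_1\ne\nu_2$ forces at least one $\nu_i>1$ and hence $n>1$. All the geometric input — the surjectivity argument producing a plane curve $C$ with $C|_D=\nu\fd$, the splitting-number formula $s_{\phi_{\bm a}}(D)=n/\nu_{\bm a}$, and the homeomorphism-to-combinatorics compatibility — is already packaged in Proposition~\ref{prop:key} and Corollary~\ref{cor:n-torsion}, so no new analysis is required here.
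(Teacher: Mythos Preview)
Your proposal is correct and follows essentially the same approach as the paper's proof: verify the combinatorics agree, use $d_0<d_1$ to force $\varphi_{\irr}(D_1)=D_2$ so that every equivalence map is admissible, and then invoke Corollary~\ref{cor:n-torsion}~\ref{cor:n-torsion-i}. If anything, you are more careful than the paper in explicitly checking $n_{[\WCC_1]}=n_{[\WCC_2]}=n$ and identifying $\ord(\ft_{i,1})=\nu_i$, which the paper leaves implicit.
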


\begin{proof}
By the definition of plane curves of type $(d_0,d_1;n)$, we have $\sing_{\WCC_i}=D_i\cap C_i$, $\sharp\sing_{\WCC_i}=\frac{d_0d_1}{n}$, and all singularities of $\WCC_i$ have the same topological types. 
Hence the bijections $\varphi_{\irr}:\irr_{\WCC_1}\to\irr_{\WCC_2}$ and $\varphi_{\sing}:\sing_{\WCC_1}\to\sing_{\WCC_2}$ induces an equivalence map $\varphi:\comb(\WCC_1)\to\comb(\WCC_2)$, where  $\varphi_{\sing}$ is a bijective map, and $\varphi_{\irr}$ is given~by 
\begin{align*}
\varphi_{\irr}(D_1):=D_2, \quad \varphi_{\irr}(C_1):=C_2.
\end{align*}
Thus $\WCC_1$ and $\WCC_2$ have the same combinatorics. 
Furthermore, any equivalence map $\varphi:\comb(\WCC_1)\to\comb(\WCC_2)$ satisfies $\varphi_{\irr}(D_1)=D_2$ and $\varphi_{\irr}(C_1)=C_2$ since $d_0<d_1$. 
Therefore $(\WCC_1,\WCC_2)$ is a Zariski pair by Corollary~\ref{cor:n-torsion}~\ref{cor:n-torsion-i} since $\nu_1\ne\nu_2$. 
\end{proof}

The following proposition provides a method for the construction of a new plane curve of type $(d_0',d_1';n',\nu')$ from a given a plane curve $\WCC=D+C$ of type $(d_0,d_1;n,\nu)$. 
Let $f_0\in H^0(\PP^2,\mcO(d_0))$ be a homogeneous polynomial defining $D$. 
From \cite[Theorem~2.7]{shirane2016} we deduce that 
$\nu$ is the minimal number for which there are homogeneous polynomials $g$ and $h$ of degree $d_1-d_0$ and $\frac{d_1}{\mu}$, respectively, such that $C$ is defined by $f_1:=h^\mu+f_0g=0$, where $\mu:=\frac{n}{\nu}$.

\begin{prop}\label{prop:type_2smooth}
Let $\WCC:=D+C$ be a plane curve of type $(d_0, d_1; n,\nu)$, 
where~$D$ is defined by $f_0=0$ and $C$ is defined by $f_1:=h^\mu+f_0g=0$ with $\mu:=\frac{n}{\nu}$. 
Let $k$ be an integer satisfying $kd_0\geq d_1$. 

Then the curve  $B_{D,k}$ defined by $f_0^k+f_1g'=0$ is smooth for a general homogeneous polynomial $g'$ of degree $kd_0-d_1$. 
Moreover, $B_{D,k}+C$ is a plane curve of type 
\[
\begin{cases}
(d_1,kd_0;kn, n)&\text{ if }d_0\equiv0\pmod{n}\text{ and }d_0<d_1,\\
(d_1,kd_0;kn,\nu)&\text{ if }d_0=d_1.
\end{cases}
\]

\end{prop}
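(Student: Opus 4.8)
The plan is to verify the two assertions of the proposition in turn: first that $B_{D,k}$ is smooth for general $g'$, and then that the local intersection numbers of $B_{D,k}$ and $C$ are as claimed, so that $B_{D,k}+C$ has the stated type $(d_0',d_1';n',\nu')$. For the smoothness statement I would argue by a Bertini-type / base-point-freeness argument: the linear system $\{f_0^k+f_1g'=0\}$ as $g'$ ranges over homogeneous polynomials of degree $kd_0-d_1$ has base locus contained in $D\cap C=\{f_0=f_1=0\}$ (since away from this locus one of $f_0^k$, $f_1$ is nonzero and $g'$ is free to adjust), and one checks that a general member is smooth away from the base locus by the usual Bertini theorem, while near each point $P\in D\cap C$ one uses local coordinates in which $f_0$ and $h$ are coordinate-like (recall $(D\cdot C)_P=n$ and $C$ is locally $h^\mu+f_0g$) to see that $f_0^k+f_1g'$ is locally smooth for general $g'$ because $kd_0\ge d_1$ forces $f_0^k$ to vanish to order $\ge$ that of $f_1$, and the linear term of $g'$ can be chosen to kill the obstruction. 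This is the step I expect to require the most care, since it is the only place genuinely new geometry enters; everything else is bookkeeping with intersection multiplicities and torsion orders.

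Next I would compute $B_{D,k}\cap C$ and the local intersection numbers. Since on $C$ we have $f_1=0$, the curve $B_{D,k}$ restricted to $C$ is cut out by $f_0^k$, so $B_{D,k}\cap C\subseteq D\cap C$ set-theoretically and, as divisors on $C$, $B_{D,k}|_C=k(f_0|_C)$. Now $f_0|_C=D|_C$ and, because $\WCC=D+C$ has type $(d_0,d_1;n)$, at each $P\in D\cap C$ we have $(D\cdot C)_P=n$; hence $(B_{D,k}\cdot C)_P=kn$ at every such point. This already shows $B_{D,k}+C$ has type $(\deg C,\deg B_{D,k};kn)=(d_1,kd_0;kn)$ in the sense of the definition, once we confirm $\deg B_{D,k}=kd_0\ge d_1=\deg C$ (true since $kd_0\ge d_1$) and that $kn$ divides $\deg B_{D,k}=kd_0$ (true since $n\mid d_0$ in the first case, and $n=d_0=d_1$ in the second).

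It remains to identify the torsion order $\nu'$ of the new curve. Set $\fo_C:=L|_C$ and $\fd':=\sum_{P\in B_{D,k}\cap C}P=\fd$ (the same reduced divisor as before, namely $\sum_{P\in D\cap C}P$), so that the relevant torsion class is $\ft':=\fd'-\frac{\deg B_{D,k}}{kn}\fo_C=\fd-\frac{d_0}{n}\fo_C$, and $\nu'=\ord(\ft')$ in $\pic^0(C)$. I would compute this order using the structural description recalled just before the proposition (from \cite[Theorem~2.7]{shirane2016}): the minimal $\nu'$ realizing $C$-side torsion is governed by the minimal $\mu'$ for which $B_{D,k}$ can be written as $(h')^{\mu'}+(\text{defining poly of }C)\cdot(\text{something})$. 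In the case $d_0=d_1$ the roles of $D$ and $C$ are symmetric ($n=d_0=d_1$, $\mu=\nu$ is forced to keep $\fd$ a $\nu$-torsion), and the identity $f_0^k+f_1g'$ exhibits $B_{D,k}$ with the same $h$ up to the $k$-th power bookkeeping, yielding $\nu'=\nu$. In the case $d_0\equiv 0\pmod n$ with $d_0<d_1$, one has $n\fd\sim d_0\fo_C$ on $C$ already at level $n$ (because $f_0^{?}$... more precisely $D|_C$ is itself $n$ times the reduced divisor and is cut by a curve of degree $d_0$), so $\ft'=\fd-\frac{d_0}{n}\fo_C$ is an $n$-torsion class whose order is exactly $n$ — no smaller power can be realized by a plane curve since $d_0<d_1$ prevents the degree drop that would be needed — giving $\nu'=n$. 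Assembling the degree data $(d_0',d_1')=(d_1,kd_0)$, the covering degree $n'=kn$, and these values of $\nu'$ yields exactly the two cases in the statement, completing the proof.
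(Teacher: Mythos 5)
Your overall route is the same as the paper's: Bertini plus a local check at the base points $C\cap D$ for smoothness of $B_{D,k}$, the observation that $B_{D,k}|_C=kD|_C$ gives $(B_{D,k}\cdot C)_P=kn$, and a role-swapping symmetry (via the identity $f_0\equiv -g^{-1}h^{\mu}\pmod{f_1}$ and the minimality statement from \cite[Theorem~2.7]{shirane2016}) for the torsion order when $d_0=d_1$. Those parts are fine, modulo the incorrect parenthetical in the $d_0=d_1$ case ($n$ is only a divisor of $d_0=d_1$, not equal to it, and $\mu=n/\nu$ need not equal $\nu$); these slips are not load-bearing.

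The genuine gap is in the case $d_0\equiv 0\pmod n$, $d_0<d_1$, where you assert that the class $\fd-\frac{d_0}{n}\fo_C$ has order exactly $n$ because ``$d_0<d_1$ prevents the degree drop that would be needed.'' That is not an argument. If the order were $n/e$ with $e>1$, then $\frac{n}{e}\fd\sim\frac{d_0}{e}\fo_C$, and the surjectivity of $H^0(\PP^2,\mcO(d_0/e))\to H^0(C,\mcO(\frac{d_0}{e}\fo_C))$ would automatically produce a plane curve $D'$ of degree $d_0/e$ with $D'|_C=\frac{n}{e}\fd$; no degree count forbids this. What actually closes the case is the \emph{injectivity} of the restriction $H^0(\PP^2,\mcO(d_0))\to H^0(C,\mcO(d_0\fo_C))$, which follows from $H^0(\PP^2,\mcO(d_0-d_1))=0$ precisely because $d_0<d_1$: it makes $D$ the unique degree-$d_0$ curve with $D|_C=n\fd$, so the hypothetical $D'$ would force $D=eD'$, contradicting that $D$ is reduced (indeed smooth and irreducible). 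You never invoke the reducedness of $D$, and without it the claimed equality $\nu'=n$ is simply false, so this step must be added.
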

\begin{proof}
Let $\Lambda$ be the linear system on $\PP^2$ consisting of divisors defined by $f_0^k+f_1g'$ for $g'\in H^0(\PP^2,\mcO(kd_0-d_1))$. 
The base points of $\Lambda$ is just the $\frac{d_0d_1}{n}$ points of $C\cap D$. 
By Bertini's theorem (see \cite{kleiman74}), a general member of $\Lambda$ is smooth outside~$C\cap D$. 

Moreover, if $g'\in H^0(\PP^2,\mcO(kd_0-d_1))$ does not vanish at each $P\in C\cap D$, $f_1g'=0$ defines a plane curve which is smooth at each $P\in C\cap D$. 
Hence $f_0^k+f_1g'=0$ defines a smooth curve $B_{D,k}$ for a general $g'$. 
It is clear that $C\cap D=C\cap B_{D,k}$. Moreover,
since $C$ and $D$ intersect with multiplicity $n$ at each $P\in C\cap D$, 
we have $(B_{D,k}\cdot C)_P=kn$. 
Thus $\WCC_{D,k}:=B_{D,k}+C$ is a plane curve of type $(d_1,kd_0;kn)$ if $d_0\equiv0\pmod{n}$. 

Let $\fd_{D,k}$ and $\fo_{D,k}$ be the divisors on $C$ for the plane curve $\WCC_{D,k}$ as given before Definition~\ref{def:order_2smooth}. 
Since $B_{D,k}$ is defined by $f_0^k+f_1g'=0$, $n$ is a divisor of the order of $\fd_{D,k}-\frac{d_1}{n}\fo_{D,k}$. 
By the short exact sequence
\[ 
0\to H^0(\PP^2,\mcO(d_0-d_1))\to H^0(\PP^2,\mcO(d_0))\to H^0(C,\mcO(d_0))\to0,
\]
$D$ is the unique divisor on $\PP^2$ satisfying $D|_C=n\fd_{D,k}$ if $d_0<d_1$. 
Since $D$ is reduced, $\mcC_{D,k}$ is of type $(d_1,kd_0;kn,n)$ if $d_0<d_1$. 

Suppose that $d_0=d_1$. 
Then, since $C\ne D$, $g$ is a non-zero complex number, and we have $f=g^{-1}(f_1-h^\mu)$. 
By the minimality of $\nu$, $D$ induces a divisor of order $\nu$ on $\pic^0(C)$ as before Definition~\ref{def:order_2smooth}. 
Since $kD|_C=B_{D,k}|_C$, $\mcC_{D,k}$ is a plane curve of type $(d_1,kd_0;kn,\nu)$. 
\end{proof}

\begin{defin}
In the situation of Proposition~\ref{prop:type_2smooth} we say that the 
plane curve of type $(d_1,kd_0;kn,\nu')$
is \emph{constructed from a curve of type $(d_0,d_1;n,\nu)$ by power of $k$}, and write $(d_0,d_1;n,\nu)\overset{k}{\leadsto}(d_1,kd_0;kn,\nu^\prime)$, where 
\[
\begin{cases}
\nu'=\nu&\text{ if }d_0=d_1,\\
\nu'=n&\text{ if } d_0\equiv0\pmod{n} \mbox{ and } d_0<d_1.
\end{cases}
\]
\end{defin}

\begin{ex}[Plane curves of type $(4,6;6,1)$ and $(4,6;6,2)$]\label{ex:6461_6462}
Since $D+C$ is a plane curve of type $(d_0,d_1;1,1)$ if $C$ and $D$ intersect transversally, there are 
plane curves of type $(d_0,d_1;1,1)$ for any $d_0\leq d_1$. 
We have 
\begin{align*}
&(1,4;1,1)\overset{6}{\leadsto}(4,6;6,1), \\
&(2,2;1,1)\overset{2}{\leadsto}(2,4;2,1)\overset{3}{\leadsto}(4,6;6,2).	
\end{align*}
Therefore, there are plane curves $\mcC_1$ and $\mcC_2$ of types $(4,6;6,1)$ and $(4,6;6,2)$, respectively. 
\end{ex}

\begin{ex}[A plane curve of type $(4,6;6,3)$]\label{ex:6463}
We construct a plane curve of type $(4,6;6,3)$. 
Let $E$ be a smooth cubic, let $O\in E$ be an inflectional point, and let  $P_1,P_2,P_3\in E$ be general points. 
There is a conic $C_2$ through $P_1,P_2,P_3$ and tangent to $E$ at $O$, 
and $E$ and $C_2$ intersect at $P_4$ other than $P_1,P_2,P_3,O$. 
Since $\sum_{i=1}^4P_i+2O\sim 2L|_E$ as divisors on $E$, we obtain 
\[ 3(P_1+P_2+P_3+P_4)\sim 12\,O\sim 4\,L|_E, \]
where $L$ is a general line on $\PP^2$. 
Hence there exists a quartic $C_4^\prime$ such that $C_4^\prime|_E=3\sum_{i=1}^4P_i$. 
Let $f_3,f_4^\prime$ be homogeneous polynomials defining $E,C_4^\prime$, respectively. 
We can choose a linear polynomial $g_1$ so that the curve $C_4$ defined by $f_4:=f_4^\prime+f_3g_1=0$ is smooth at $C_4^\prime\cap E$. 
By the same argument of the proof of Proposition~\ref{prop:type_2smooth}, $f_3^2+f_4g=0$ defines a smooth sextic $B$ for a general homogeneous polynomial $g$ of degree $2$, and $\WCC_3:=C_4+B$ is of type $(4,6;6)$. 
Note that $B\cap C_4=C_4\cap E=\{P_1,\dots,P_4\}$. 
Since $P_1,P_2,P_3\in B\cap C_4$ are not collinear, $\WCC_3$ is not of type $(4,6;6,1)$. 
If $\WCC_3$ is of type $(4,6;6,2)$, then there is a conic $C_2^\prime$ tangent to $C_4$ with multiplicity $2$ at each $P\in B\cap C_4$; 
this implies that $C_2^\prime.E\geq 8$, which is a contradiction. 
Therefore, $\WCC_3$ is of type $(4,6;6,3)$. 
\end{ex}

\begin{ex}[A plane curve of type $(4,6;6,6)$]\label{ex:6466}
It is more difficult to construct a plane curve of type $(4,6;6,6)$. Let us sketch how to construct such a curve.
Let $C_3$ be a nodal cubic and $O\in C_3$ an inflectional point. We consider the additive group structure on $C_3$ with $O$ being the zero element. We denote the addition on $C_3$ by $\dot+$. Let us consider an irreducible quartic $C_4$ such that 
$C_3\cap C_4=\{P_1,P_2\}$, where $P_1, P_2$ are not torsion points of $C_3$,  $(C_3\cdot C_4)_{P_i}=6$ and $\sing(C_4)=\{P_0\}$
where $(C_4,P_0)$ is of type $\mathbb{A}_3$. Such a curve exists, however we skip the details since the actual
example has coefficients in a number field of degree~$4$. Then
$Q:=P_1\dot+P_2$ is an element of order~$6$. These curves satisfy the following properties:
there is no smooth conic $C_2$ such that $(C_2\cdot C_4)_{P_1}=(C_2\cdot C_4)_{P_2}=2$
and $(C_2\cdot C_4)_{P_0}=4$ (We omit the details due to long computation.); moreover, there is no smooth conic $C'_2$ such that $(C'_2\cdot C_4)_{P_1}=(C'_2\cdot C_4)_{P_2}=3$
and $(C'_2\cdot C_4)_{P_0}=2$ (from the group law of the cubic).

Let us choose coordinates such that
$P_0=[0:1:0]$ and its tangent line is $Z=0$. Let $\sigma:\PP^2\to\PP^2$ be the rational map defined by $\sigma([x:y:z])=[x z: y^2: z^2]$. 
Let $C_6$, $D_4$ be the strict transforms 
by $\sigma$ of $C_3$, $C_4$, respectively. Here, the subindices indicate their degrees.
Choose a generic conic $D_2$, and a generic element $D_6$ of the 
pencil generated by $C_6$ and $D_4+D_2$.
The above properties imply that $\mcC_6:=D_4+D_6$ is of type $(4,6;6,6)$.
These computations can be found at
\url{https://github.com/enriqueartal/TorsionDivisorsZariskiPairs}, and can be checked using 
\texttt{Sagemath}~\cite{sage96} or \texttt{Binder}~\cite{binder}.
\end{ex}

Finally we prove Theorem~\ref{thm:example}. 

\begin{proof}[Proof of Theorem~\ref{thm:example}]
	By Examples~\ref{ex:6461_6462}, \ref{ex:6463} and \ref{ex:6466}, there are plane curves of type $(4,6;6,1)$, $(4,6;6,2)$, $(4,6;6,3)$ and $(4,6;6,6)$. Therefore there exists a Zariski $4$-tuple by Corollary~\ref{cor:example}. 
\end{proof}

\noindent
{\bf Acknowledgements:}
The final step of this paper was done during the authors visit in Universit\'e de Pau et des Pays de l'Adour. 
They thank V.~Florens and Universit\'e de Pau et des Pays de l'Adour for their hospitality. 
Also the last three authors thank E.~Artal Bartolo and Universidad de Zaragoza for the support to their stay in Zaragoza.

\providecommand{\bysame}{\leavevmode\hbox to3em{\hrulefill}\thinspace}
\providecommand{\MR}{\relax\ifhmode\unskip\space\fi MR }
\providecommand{\MRhref}[2]{%
  \href{http://www.ams.org/mathscinet-getitem?mr=#1}{#2}
}
\providecommand{\href}[2]{#2}

%
%
%

\end{document}